\newtheorem{Question}{Question}
\begin{document}

\title{Complexity Results for Two Kinds of Colored Disconnections of Graphs\thanks{Supported by NSFC No.11871034 and 11531011.}}

\author{You Chen \and Ping Li \and Xueliang Li \and Yindi Weng}

\authorrunning{ Y. Chen et al.}

\institute{ Center for Combinatorics and LPMC\\
  Nankai University, Tianjin 300071, China\\
\email{chen\_you@163.com; wjlpqdxs@163.com;\\
         lxl@nankai.edu.cn; 1033174075@qq.com}\\}

\maketitle

\begin{abstract}
The concept of rainbow disconnection number of graphs was introduced by Chartrand et al. in 2018. Inspired by this concept, we put forward the concepts of rainbow vertex-disconnection and proper disconnection in graphs. In this paper, we first show that it is $NP$-complete to decide whether a given edge-colored graph $G$ with maximum degree $\Delta(G)=4$
is proper disconnected. Then, for a graph $G$ with $\Delta(G)\leq 3$ we show that $pd(G)\leq 2$ and determine the graphs with $pd(G)=1$ and $2$, respectively. Furthermore, we show that for
a general graph $G$, deciding whether $pd(G)=1$ is $NP$-complete, even if $G$ is bipartite. We also show that it is $NP$-complete to decide whether a given vertex-colored graph $G$ is rainbow
vertex-disconnected, even though the graph $G$ has $\Delta(G)=3$ or is bipartite.

\keywords{ Edge-cut \and Vertex-cut \and Rainbow (vertex-) disconnection \and Proper disconnection \and $NP$-complete.}
\end{abstract}

\section{Introduction}

\noindent All graphs considered in this paper are finite, simple and undirected. Let $G=(V(G),E(G))$ be a nontrivial
connected graph with vertex set $V(G)$ and edge set $E(G)$.
For a vertex $v\in V$, the \emph{open neighborhood} of $v$ is the set $N(v)=\{u\in V(G) | uv\in E(G)\}$ and the \emph{degree} of $v$ is
$d(v)=|N(v)|$, and the \emph{closed neighborhood} is the set $N[v]=N(v)\cup \{v\}$.
For any notation and terminology not defined here, we follow those used in \cite{BM}.

For a graph $G$ and a positive integer $k$, let $c: E(G)\rightarrow[k]$ ($c: V(G)\rightarrow[k]$) be an edge-coloring (vertex-coloring) of $G$, where and in what follows $[k]$
denotes the set $\{1,2,...,k\}$ of integers.
For an edge $e$ of $G$, we denote the color of $e$
by $c(e)$.

In graph theory, paths and cuts are two dual concepts. By Menger's Theorem, paths are in the same position as cuts are in studying graph connectivity. Chartrand et al. in \cite{CJMZ} introduced the concept
of \emph{rainbow connection} of graphs. \emph{Rainbow disconnection}, which is a dual concept of rainbow connection, was introduced by Chartrand et al. \cite{CDHHZ}.
An \emph{edge-cut} of a graph $G$ is a set $R$ of edges such that
$G-R$ is disconnected. If any two edges in $R$ have different colors, then $R$
is a \emph{rainbow edge-cut}. An edge-coloring is called a
\emph{rainbow disconnection coloring} of $G$ if for every
two distinct vertices of $G$, there exists a rainbow edge-cut in
$G$ separating them. For a connected graph $G$, the \emph{rainbow disconnection number}
of $G$, denoted by $rd(G)$, is the smallest number of
colors required for a rainbow disconnection coloring of
$G$. A rainbow disconnection coloring using $rd(G)$ colors is called an
rd-\emph{coloring} of $G$. In \cite{BCL} and \cite{BHL} the authors
obtained many results on the rainbow disconnection number.

Inspired by the concept rainbow disconnection, the authors in \cite{BCLLW} and \cite{LW} introduced the concept of rainbow vertex-disconnection.
For a connected and vertex-colored graph $G$, let $x$ and $y$ be two vertices of $G$.
If $x$ and $y$ are nonadjacent, then an $x$-$y$ \emph{vertex-cut} is a subset $S$ of $V(G)$ such that $x$ and $y$ belong to
different components of $G-S$. If $x$ and $y$ are adjacent, then an $x$-$y$ \emph{vertex-cut} is a subset $S$ of $V(G)$ such that
$x$ and $y$ belong to different components of $(G-xy)-S$. A vertex subset $S$ of $G$ is \emph{rainbow} if no two vertices of $S$
have the same color. An $x$-$y$ \emph{rainbow vertex-cut} is an $x$-$y$ vertex-cut $S$ such that if $x$ and $y$ are nonadjacent,
then $S$ is rainbow; if $x$ and $y$ are adjacent, then $S+x$ or $S+y$ is rainbow.

A vertex-colored graph $G$ is called \emph{rainbow vertex-disconnected} if for any two distinct vertices $x$ and $y$ of $G$,
there exists an $x$-$y$ rainbow vertex-cut. In this case, the vertex-coloring $c$ is called a \emph{rainbow vertex-disconnection coloring}
of $G$. For a connected graph $G$, the \emph{rainbow vertex-disconnection number}
of $G$, denoted by $rvd(G)$, is the minimum number of colors that are needed to make $G$ rainbow vertex-disconnected.
A rainbow vertex-disconnection coloring with $rvd(G)$ colors is called an
rvd-\emph{coloring} of $G$.

Andrews et al.~\cite{ALLP} and Borozan et al.~\cite{BFGMMMT} independently introduced the concept of \emph{proper connection} of graphs.
Inspired by the concept of rainbow disconnection and proper connection of graphs, the authors in \cite{BCJ} and \cite{CLLW} introduced the concept of proper
disconnection of graphs. For an edge-colored graph $G$, a set $F$ of edges of $G$ is a \emph{proper edge-cut}
if $F$ is an edge-cut of $G$ and any pair of adjacent edges in $F$ are assigned by different colors. For any two vertices $x,y$ of $G$,
an edge set $F$ is called an $x$-$y$ proper edge-cut if $F$ is a proper edge-cut and $F$ separates $x$ and $y$ in $G$. An edge-colored
graph is called \emph{proper disconnected} if for each pair of distinct vertices of $G$ there exists a proper edge-cut separating them.
For a connected graph $G$, the \emph{proper disconnection number} of $G$, denoted by $\mathit{pd(G)}$, is defined as the minimum number
of colors that are needed to make $G$ proper disconnected, and such an edge-coloring is called a pd-\emph{coloring}. From \cite{BCJ},
we know that if $G$ is a nontrivial connected graph, then
$1\leq \mathit{pd}(G)\leq \mathit{rd}(G)\leq \chi'(G)\leq \Delta(G)+1$, where $\chi'(G)$ denotes the chromatic index or edge-chromatic number
of $G$.

These graph parameters are some kinds of chromatic numbers, which are used to characterize the global property, i.e., the connectivity for colored graphs.
An immediate question is how to calculate them ? Are there any good or efficient algorithms to compute them ? or it is NP-hard to get them.
For the rainbow disconnection number of graphs, we showed in \cite{BCL} that it is NP-complete to determine whether the rainbow disconnection
number of a cubic graph is 3 or 4, and moreover, we showed that given an edge-colored graph $G$ and two vertices $s, t$ of $G$, deciding
whether there is a rainbow cut separating $s$ and $t$ is NP-complete. In this paper we will determine the computational complexity of
proper (rainbow vertex-) disconnection numbers of graphs.

Our paper is organized as follows. In Section 2, we first show that it is $NP$-complete to decide whether a given $k$-edge-colored graph $G$
with maximum degree $\Delta(G)=4$ is proper disconnected. Then for a graph $G$ with $\Delta(G)\leq 3$, we show that $pd(G)\leq 2$, and determine
the graphs with $pd(G)=1$ and $2$, respectively. Furthermore, we show that it is $NP$-complete to decide whether $pd(G)=1$, even though $G$ is a bipartite graph.
In Section 3, we show that it is $NP$-complete to decide whether a given vertex-colored graph $G$ is rainbow vertex-disconnected,
even though the graph $G$ has $\Delta(G)=3$ or is bipartite.

\section{Hardness results for proper disconnection number}

\noindent In this section, we show that it is $NP$-complete to decide
whether a given $k$-edge-colored graph $G$ with maximum degree $\Delta(G)=4$ is proper disconnected.
Then we give the proper disconnection numbers of graphs with $\Delta(G)\leq 3$, and propose an unsolved question.
Furthermore, we show that it is $NP$-complete to decide whether $pd(G)=1$, even though $G$ is a bipartite graph.

\subsection{Hardness results for graphs with maximum degree four}

\noindent We first give some notations. For an edge-colored graph $G$,
let $F$ be a proper edge-cut of $G$.
If $F$ is a matching, then $F$ is called a \emph{matching cut}.
Furthermore, if $F$ is an $x$-$y$ proper edge-cut for vertices $x,y\in G$,
then $F$ is called an $x$-$y$ \emph{matching cut}.
For a vertex $v$ of $G$, let $E_v$ denote the set of all edges incident with $v$ in $G$.

We can obtain the following results by means of
a reduction from the NAE-3-SAT problem. At first we present the NAE-3-SAT problem, which is $NP$-complete; see~\cite{sch78,ADJD}.

{\bf Problem: } Not-All-Equal 3-Sat (NAE-3-SAT)

{\bf Instance: } A set $C$ of clauses, each containing 3 literals from a set of boolean variables.

{\bf Question:} Can truth value be assigned to the variables so that each clause contains at least one true literal and at least one false literal ?

Given a formula $\phi$ with variable $x_1,\cdots,x_n$,
let $\phi=c_1\wedge c_2\wedge\cdots\wedge c_m$,
where $c_i=(l_1^i\vee l_2^i\vee l_3^i)$.
Then $l_j^i\in\{x_1,\overline{x_1},\cdots,x_n,\overline{x_n}\}$
for each $i\in[m]$ and $j\in[3]$.

\begin{figure}[h]
    \centering
    \includegraphics[width=234pt]{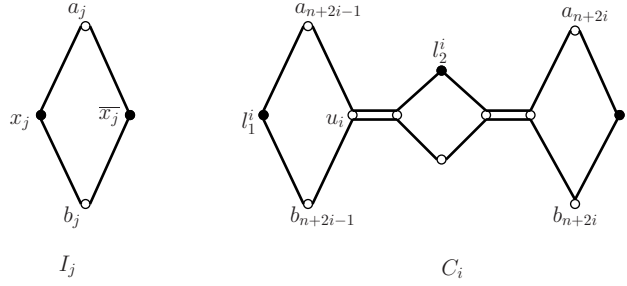}\\
    \caption{The graphs $I_j$ and $C_i$.} \label{SAT1}
\end{figure}

We will construct a graph $G_\phi$ below.
The graphs $I_j$ and $C_i$ are shown in Figure \ref{SAT1} where $j\in[n]$ and $i\in[m]$.
Each graph $C_i$ has two pairs of parallel edges.
The graph $G_\phi$ (see Figure \ref{SAT11}) is obtained from mutually disjoint graphs $I_j$ and $C_i$
by adding a pair of parallel edges between $z$ and $w$
if $z,w$ satisfy one of the following conditions:
\begin{enumerate}
 \item $z=a_i$ and $w=a_{i+1}$ for some $i\in[n+2m-1]$;
 \item $z=b_i$ and $w=b_{i+1}$ for some $i\in[n+2m-1]$;
 \item $z=x_j$, $w=l_t^i$ and $x_j=l_t^i$ for some $j\in[n], t\in[3]$ and $i\in[m]$;
 \item $z=\overline{x_j}$, $w=l_t^i$ and $\overline{x_j}=l_t^i$ for some $j\in[n], t\in[3]$ and $i\in[m]$.
\end{enumerate}

\begin{figure}[h]
    \centering
    \includegraphics[width=320pt]{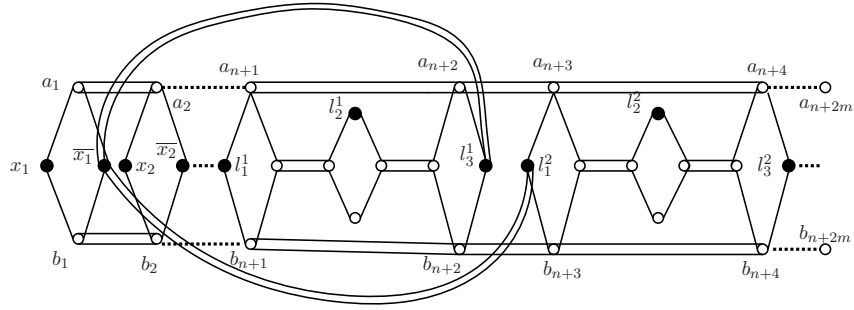}\\
    \caption{The graph $G_\phi$ with $l_3^1=l_1^2=\overline{x_1}$.} \label{SAT11}
\end{figure}

In fact, the graph $G_\phi$ was constructed in \cite{PP} (in Section 3.2).
It is obvious that each vertex of $G_\phi$ with degree greater than four
is a vertex with even degree. Moreover, there are two simple edges incident with this kind of vertex,
and the other edges incident with the vertex are some pairs of parallel edges.
The authors proved that $G_\phi$ has a matching cut if and only if
the corresponding instance $\phi$ of NAE-3-SAT problem has a solution.

We present a {\em star structure} as shown in Figure \ref{Star}\ (1).
Each vertex $z_i$ is called a {\em tentacle}.
A star structure is a {\em $k$-star structure}
if it has $k$ tentacles.
\begin{figure}[h]
    \centering
    \includegraphics[width=350pt]{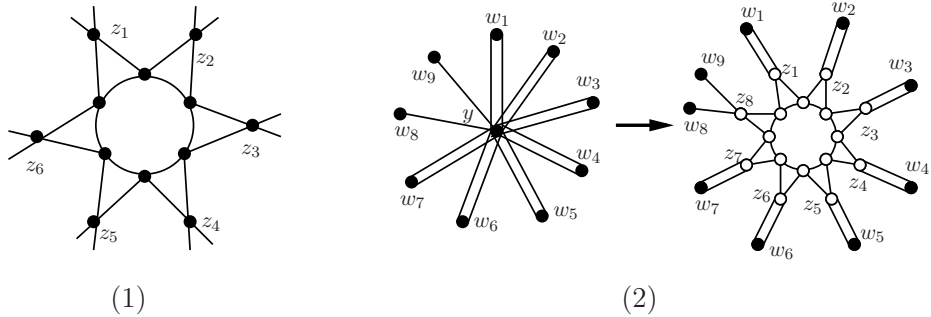}\\
    \caption{(1) A $6$-star structure with tentacles $z_1,\cdots,z_6$, and (2) the operation $\mathcal{O}$ on vertex $y$ with degree $16$.} \label{Star}
\end{figure}

For a vertex $y$ of $G_\phi$ with $d_{G_\phi}(y)=2t+2>4$,
assume $N(y)=\{w_1,\cdots,w_{t+2}\}$
such that $w_{t+1},w_{t+2}$ connect $y$ by a simple edge respectively,
and $w_i$ connects $y$ by a pair of parallel edges for $i\in[t]$.
Now we define an operation $\mathcal{O}$ on vertex $y$:
replace $y$ by a $(t+1)$-star structure with tentacles $z_1,\cdots,z_{t+1}$
such that $w_{i}$ and $z_{t+1}$ for $i\in \{t+1,t+2\}$ are connected by a simple edge, and $z_i$ and $w_i$ are connected by parallel edges for $i\in[t]$.
As an example, Figure \ref{Star} (2) shows the operation $\mathcal{O}$ on vertex $y$ with degree $16$.
We apply the operation $\mathcal{O}$ on each vertex of degree greater than four,
and then subdivide one of each pair of parallel edges by a new vertex in $G_\phi$.
Denote the resulting graph by $G'_\phi$, which is a simple graph.
The graph $G'_\phi$ was also defined in \cite{PP},
and the authors showed that $G'_\phi$ has a matching cut if and only if
the corresponding instance $\phi$ of NAE-3-SAT problem has a solution.

Now we construct a graph, denoted by $H_\phi$, obtained from $G_\phi$ by operations as follows.
Add two new vertices $u$ and $v$.
Connect $u$ and each vertex of $\{a_1,a_{n+2m}\}$
by a pair of parallel edges, and connect $v$
and each vertex of $\{b_1,b_{n+2m}\}$ by a pair of parallel edges.
We apply the operation $\mathcal{O}$ on each vertex of degree greater than four in $H_\phi$,
and then subdivide one of each pair of parallel edges by a new vertex.
Denote the resulting graph by $H'_\phi$\ (see Figure \ref{SAT22}), which is a simple graph.
Observe that $\Delta(H'_\phi)=4$.
Since a minimal matching cut cannot contain any edge in a triangle,
we know that there is a $u$-$v$ matching cut in $H'_\phi$ if and only if
there is a matching cut in $G'_\phi$. Thus, there is a $u$-$v$ matching cut in $H'_\phi$ if and only if
the instance $\phi$ of NAE-3-SAT problem has a solution.
\begin{figure}[h]
    \centering
    \includegraphics[width=300pt]{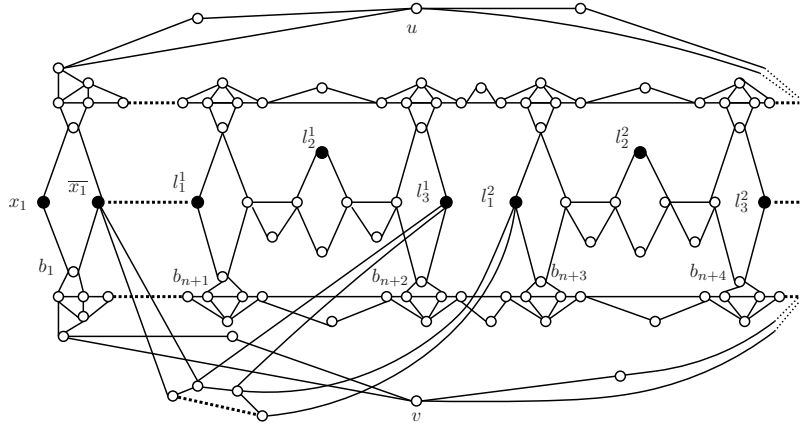}\\
    \caption{The graph $H'_\phi$ with $l_3^1=l_1^2=\overline{x_1}$.} \label{SAT22}
\end{figure}

\begin{theorem} \label{NPC}
For a fixed positive integer $k$, let $G$ be a $k$-edge-colored graph with maximum degree $\Delta(G)=4$, and let $u,v$ be any two specified vertices of $G$.
Then deciding whether there is a $u$-$v$ proper edge-cut in $G$ is $NP$-complete.
\end{theorem}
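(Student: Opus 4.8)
\noindent The plan is to verify membership in $NP$ and then reduce from NAE-3-SAT, reusing the graph $H'_\phi$ constructed above. For membership, a certificate is simply a set $F\subseteq E(G)$; in time linear in $|E(G)|$ one deletes $F$ and checks that $u$ and $v$ lie in different components of $G-F$, and one checks that no two edges of $F$ sharing an endpoint received the same colour. Since both tests run in polynomial time, the problem is in $NP$.

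For hardness I would use the transformation $\phi\mapsto H'_\phi$ already set up. Recall that $H'_\phi$ is a simple graph with $\Delta(H'_\phi)=4$, is computable from $\phi$ in polynomial time, and has a $u$-$v$ matching cut if and only if the NAE-3-SAT instance $\phi$ has a solution; and NAE-3-SAT is $NP$-complete. It remains only to attach an edge-colouring. The crucial observation is that when \emph{every} edge of a graph is given the same colour, a proper edge-cut is exactly a matching cut: the properness requirement then says precisely that no two edges of the cut share an endpoint, i.e.\ that the cut is a matching. Therefore, colouring all edges of $H'_\phi$ with colour $1$ — a legitimate $k$-edge-colouring for any fixed $k\ge 1$, since the colours of a $k$-edge-colouring need not all be used — yields an instance in which a $u$-$v$ proper edge-cut exists if and only if a $u$-$v$ matching cut exists, hence if and only if $\phi$ is NAE-satisfiable. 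This is a polynomial-time many-one reduction, so the problem is $NP$-hard for every fixed $k$, and combined with membership in $NP$ the theorem follows.

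Almost all of the work has in fact already been done in the construction preceding the statement: building $H'_\phi$ with maximum degree exactly $4$ (via the operation $\mathcal{O}$ and the subsequent subdivisions, together with the $u$- and $v$-gadgets), and establishing the matching-cut/NAE-3-SAT equivalence (which uses that a minimal matching cut contains no edge of a triangle and that $\mathcal{O}$ and the subdivisions preserve the existence of matching cuts). So I do not expect a genuine obstacle here; the points that still need care are the bookkeeping ones — confirming that applying $\mathcal{O}$ and the $u,v$-gadgets does not push any degree above $4$, and verifying cleanly that under the monochromatic colouring the notions ``$u$-$v$ proper edge-cut'' and ``$u$-$v$ matching cut'' coincide, so that the equivalence already proved for matching cuts transfers verbatim. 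The single-colour trick also makes the result robust in $k$: a larger palette can only make it easier to satisfy the properness condition, so it cannot destroy the reduction.
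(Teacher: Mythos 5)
Your proposal is correct and follows essentially the same route as the paper: NP-membership by direct checking, plus the reduction from NAE-3-SAT through $H'_\phi$ together with the observation that under a monochromatic coloring a proper edge-cut is exactly a matching cut. The only difference is that the paper additionally attaches a pendant path of order $k$ at a degree-$2$ vertex of $H'_\phi$ and colors its edges $2,\dots,k$ so that all $k$ colors genuinely appear; if one reads ``$k$-edge-colored'' as requiring every color to be used, you would need this harmless padding step, which does not affect the existence of a $u$-$v$ proper edge-cut.
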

\begin{proof}
For a connected graph $G$ with an edge-coloring $c: E(G)\rightarrow [k]$
and an edge-cut $D$ of $G$,
let $M_i=\{e\ |\  e\in D \mbox{ and } c(e)=i\}$ for $i\in [k]$.
Then $D$ is a proper edge-cut if and only if each $M_i$ is a matching.
Therefore, deciding whether a given edge-cut of an edge-colored graph is a proper edge-cut is in $P$.

For an instance $\phi$ of the NAE-3-SAT problem,
we can obtain the corresponding graph $H'_\phi$ as defined above.
Then there is a vertex, say $y'$, of $H'_\phi$ with degree two.
Let $G$ be a graph obtained from $H'_\phi$ and a path $P$ of order $k$ by identifying $y'$ and one of the ends of $P$.
Then $\Delta(G)=4$. We color each edge of $G-E(P)$ by $1$ and color $k-1$ edges of $P$ by $2,3,\cdots,k$, respectively.
Then the edge-coloring is a $k$-edge-coloring of $G$, and there is a $u$-$v$ proper edge-cut in $G$ if and only if there is a $u$-$v$ matching cut in $H'_\phi$.
Thus, we get that there is a $u$-$v$ proper edge-cut in $G$ if and only if
the instance $\phi$ of NAE-3-SAT problem has a solution.
\end{proof}

\begin{corollary}\label{cor}
For a fixed positive integer $k$, let $G$ be a $k$-edge-colored graph with maximum degree $\Delta(G)=4$. Then
it is $NP$-complete to decide whether $G$ is proper disconnected.
\end{corollary}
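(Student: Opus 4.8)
The plan is to reduce directly from the decision problem solved in Theorem \ref{NPC}: given a $k$-edge-colored graph with $\Delta=4$ and two specified vertices $u,v$, decide whether there is a $u$-$v$ proper edge-cut. Since proper disconnection is a ``for all pairs'' statement while the theorem concerns a single pair, the first task is to build a gadget that forces \emph{every} other pair of vertices to be trivially separable by a proper edge-cut, so that the only obstruction to $G$ being proper disconnected is the $u$-$v$ pair. I would take the graph $G$ constructed in the proof of Theorem \ref{NPC}, and for each vertex $w$ other than $u$ and $v$, attach a small rigid gadget (for instance, a short path, or a pendant structure) so that $w$ acquires an incident edge of a color that appears on no adjacent edge; then the single edge at $w$ (or a cut of size one near $w$) is automatically a proper edge-cut separating $w$ from the rest. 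One must of course check that these attachments neither raise the maximum degree above $4$ nor create a spurious $u$-$v$ proper edge-cut, and that they do not destroy the equivalence ``$u$-$v$ proper edge-cut exists $\iff$ $\phi$ is satisfiable.''

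Concretely, I would argue as follows. Membership in $NP$ is immediate: a certificate is an edge-cut $D$, and by the observation at the start of the proof of Theorem \ref{NPC}, checking that each color class $M_i$ of $D$ is a matching (and that $D$ disconnects $G$) is polynomial; to certify that $G$ is proper disconnected one supplies such a $D_{xy}$ for each of the $O(|V|^2)$ pairs $\{x,y\}$. For hardness, start from an NAE-3-SAT instance $\phi$, form $H'_\phi$ and then the $k$-edge-colored graph $G$ exactly as in Theorem \ref{NPC}, so that $G$ has $\Delta(G)=4$ and possesses a $u$-$v$ proper edge-cut iff $\phi$ has a solution. Now modify $G$ to $\widehat{G}$ by ensuring that for every vertex $x\notin\{u,v\}$ there is a proper edge-cut of $\widehat G$ separating $x$ from every other vertex; the cleanest way is to note that $G$ already has many degree-$\le 3$ vertices, so a single incident edge recolored (or a pendant edge added) with a ``fresh-locally'' color yields a one-edge proper edge-cut, and such one-edge cuts separate the two endpoints, hence after doing this carefully along a spanning structure every pair except possibly $\{u,v\}$ is separated. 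Then $\widehat G$ is proper disconnected iff additionally $u$ and $v$ admit a proper edge-cut iff $\phi$ is satisfiable.

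The main obstacle I anticipate is the bookkeeping in the gadget: one must simultaneously (i) keep $\Delta=4$, (ii) make sure the added/recolored edges genuinely give proper edge-cuts for \emph{all} non-$\{u,v\}$ pairs (a one-edge cut only separates its two endpoints, so one needs these cuts to be arranged so that every pair other than $\{u,v\}$ is split by \emph{some} such cut — e.g.\ via a cut edge whose removal isolates a vertex, applied to enough vertices, or by observing that $H'_\phi$ has cut vertices / bridges one can exploit), and (iii) not introduce any new $u$-$v$ proper edge-cut that would break the ``only if'' direction. Once the gadget is verified, the reduction is clearly polynomial-time and the equivalence with satisfiability of $\phi$ follows, completing the proof that deciding whether a $k$-edge-colored graph with $\Delta(G)=4$ is proper disconnected is $NP$-complete. $\qed$
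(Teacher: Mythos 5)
The paper states this corollary with no written proof, treating it as immediate from Theorem~\ref{NPC}; you are right to sense that it is not immediate, because ``proper disconnected'' quantifies over \emph{all} pairs of vertices while the theorem only controls the single pair $\{u,v\}$. So your high-level plan (make every other pair trivially separable and let $\{u,v\}$ carry the hardness) is the right and essentially the only reasonable one. Unfortunately the concrete mechanism you propose does not work, and the part you defer as ``bookkeeping'' is in fact the entire difficulty.

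First, a one-element set $\{e\}$ is an edge-cut only when $e$ is a bridge, so recoloring an edge at $w$, or attaching a pendant edge at $w$, does not yield a one-edge proper edge-cut separating $w$ from the other vertices; a pendant edge only separates the new leaf from everything else. To separate $w$ from every other vertex by the cut $E_w$ you would need all edges incident with $w$ to receive pairwise distinct colors (they are pairwise adjacent at $w$), and that is incompatible with the monochromatic coloring of $H'_\phi$ on which Theorem~\ref{NPC} relies to identify $u$-$v$ proper edge-cuts with $u$-$v$ matching cuts. Second, and more seriously, the instance $G$ built in Theorem~\ref{NPC} contains monochromatic triangles: subdividing one edge of a pair of parallel edges between $z$ and $w$ creates a triangle on $z$, $w$ and the subdivision vertex, and the paper itself invokes the presence of triangles in $H'_\phi$ when it argues about minimal matching cuts. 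For adjacent vertices $x,y$ of a triangle $xyz$, any edge-cut separating them must contain $xy$ together with one of $xz$, $zy$, i.e.\ two adjacent edges of the same color, so under the all-$1$ coloring \emph{no} proper edge-cut separates $x$ from $y$. Hence the unmodified instance is never proper disconnected, regardless of $\phi$, and the reduction collapses unless every such pair is explicitly repaired --- precisely the construction your proposal leaves unspecified, and the repair is in tension with condition (iii) of your own checklist, since making cuts around each vertex ``locally rainbow'' risks creating non-matching $u$-$v$ proper edge-cuts. Your NP-membership argument (one polynomially checkable cut per pair) is fine; the hardness half needs a genuinely worked-out gadget, which is present neither in your sketch nor, for that matter, in the paper.
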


Even though it is still not clear for the computational complexity of deciding whether a graph with maximum degree at most three is proper disconnected,
we will show that $pd(G)\leq 2$ for a graph $G$ with maximum degree $\Delta(G)\leq 3$ and then determine the graphs with $pd(G)=1$ and $2$, respectively.
Some preliminary results are given as follows, which will be used in the sequel.

\begin{theorem}\upshape\cite{BCJ}\label{treepd}
If $G$ is a tree, then $pd(G)=1$.
\end{theorem}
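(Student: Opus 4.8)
The plan is to exhibit a single-colour pd-coloring directly, exploiting the fact that every edge of a tree is a bridge.

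First I would fix the edge-coloring $c$ that assigns colour $1$ to every edge of $T$; it then suffices to check that $c$ is a pd-coloring, i.e., that under $c$ every pair of distinct vertices is separated by some proper edge-cut. So next I would fix two distinct vertices $x,y\in V(T)$. Since $T$ is a tree, there is a unique $x$-$y$ path $P$, and $P$ contains at least one edge; let $e$ be any edge of $P$. Because $e$ lies on the unique path joining $x$ and $y$, the graph $T-e$ has $x$ and $y$ in different components, so $\{e\}$ is an $x$-$y$ edge-cut. A one-element edge set contains no two adjacent edges, hence it is vacuously properly coloured, and therefore $\{e\}$ is an $x$-$y$ proper edge-cut of the edge-colored graph $(T,c)$. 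As $x,y$ were arbitrary, $c$ is a pd-coloring that uses only one colour, whence $pd(T)\le 1$.

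Finally I would invoke the general lower bound $pd(G)\ge 1$ valid for every nontrivial connected graph (recorded in the chain $1\le pd(G)\le rd(G)\le\chi'(G)\le\Delta(G)+1$ cited earlier) to conclude that $pd(T)=1$. There is essentially no obstacle in this argument; the only point that deserves to be stated carefully is that a singleton edge-cut is automatically a proper edge-cut, which is precisely what makes one colour suffice.
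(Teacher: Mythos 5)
Your proof is correct: every edge of a tree is a bridge, so a single edge on the unique $x$-$y$ path is an $x$-$y$ edge-cut, and a singleton edge set is vacuously proper, so one colour suffices. The paper states this result as a citation to an earlier reference without reproducing a proof, and your argument is exactly the standard one that reference uses.
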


\begin{theorem}\upshape\cite{BCJ}\label{cyclepd}
If $C_n$ be a cycle, then
$$\textnormal{pd}(C_n)=\left\{
\begin{array}{lcl}
2, &  {if~n=3},\\
1, &  {if~n\geq 4.}
\end{array}
\right.$$
\end{theorem}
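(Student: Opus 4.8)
The plan is to treat the two regimes $n=3$ and $n\ge 4$ separately and, in each, prove a matching lower and upper bound. The one tool needed throughout is the dictionary between proper edge-cuts and matchings in a cycle: under any single-color coloring a set of edges is a proper edge-cut exactly when it is an edge-cut that forms a matching, and every inclusion-minimal edge-cut of a cycle consists of precisely two edges (deleting any two edges of $C_n$ splits it into two paths or into a vertex and a path, so it disconnects, whereas deleting a single edge never does).

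For $n=3$ I would first argue $pd(C_3)\ge 2$. Since $C_3$ is $2$-edge-connected, every edge-cut has at least two edges, and any two of the three edges of $C_3$ meet at a vertex, hence are adjacent; so $C_3$ has no matching edge-cut, and a single color cannot yield even one proper edge-cut. For the reverse inequality I would exhibit an explicit $2$-coloring: color the edges $ab,bc,ca$ by $1,1,2$ and check the three vertex pairs. Deleting $\{ab,ca\}$ leaves only the edge $bc$, so this set is a proper edge-cut separating $a$ from $b$ and also $a$ from $c$ (its two edges share $a$ and get colors $1$ and $2$); deleting $\{bc,ca\}$ leaves only $ab$ and is a proper $b$--$c$ edge-cut. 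Hence $pd(C_3)=2$.

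For $n\ge 4$ it suffices, since $pd(G)\ge 1$ always, to show that the all-$1$ coloring is a pd-coloring; equivalently, that any two vertices $v_i,v_j$ of $C_n=v_1v_2\cdots v_nv_1$ are separated by a matching of size two. Deleting one edge from each of the two internally disjoint $v_i$--$v_j$ arcs of $C_n$ disconnects the cycle with $v_i$ and $v_j$ in different components, and two such edges can be adjacent only by sharing $v_i$ or $v_j$. If $v_i$ and $v_j$ are nonadjacent, both arcs have length at least $2$, so I can pick from the first arc an edge not incident to $v_j$ and from the second arc an edge not incident to $v_i$; these two edges share no vertex. If instead $v_iv_j\in E(C_n)$, then the other arc has length $n-1\ge 3$, hence contains an interior edge incident to neither $v_i$ nor $v_j$; pairing it with $v_iv_j$ again gives a matching edge-cut separating the two vertices. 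In all cases the all-$1$ coloring works, so $pd(C_n)=1$.

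I expect the only place requiring care is the last paragraph: when one of the arcs is short (length $1$ or $2$, with $n=4$ as the extremal instance) one has to make the choice of the two deleted edges explicit in order to be sure they are nonadjacent. Beyond this small bit of bookkeeping the argument is entirely routine, relying only on the matching/edge-cut description of proper edge-cuts in cycles.
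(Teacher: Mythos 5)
Your argument is correct and complete: the reduction of proper edge-cuts under a monochromatic coloring to matching edge-cuts, the observation that every edge-cut of $C_3$ contains two adjacent edges (giving $pd(C_3)\ge 2$), the explicit $2$-coloring of the triangle, and the choice of one nonadjacent edge from each arc for $n\ge 4$ all check out, including the short-arc cases you flag. Note that the paper itself states this theorem without proof, citing \cite{BCJ}, so there is no in-paper argument to compare against; your proof is the standard one and would serve as a self-contained justification.
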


\begin{theorem}\upshape\cite{BCJ}\label{t2}
For any integer $n\geq 2$, $pd(K_{n})=\lceil \frac{n}{2} \rceil$.
\end{theorem}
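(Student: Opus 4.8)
The plan is to prove the two inequalities $pd(K_n)\geq\lceil n/2\rceil$ and $pd(K_n)\leq\lceil n/2\rceil$ separately. For the lower bound I would argue directly about the structure of proper edge-cuts in a complete graph. Suppose $c$ is a pd-coloring of $K_n$ using $t$ colors, pick two distinct vertices $x,y$, and let $F$ be a proper edge-cut separating them. Let $A$ be the vertex set of the component of $K_n-F$ containing $x$, and $B=V(K_n)\setminus A$, so $y\in B$. Since $K_n-F$ has no edge between $A$ and $B$, the entire edge set of the complete bipartite graph joining $A$ and $B$ lies inside $F$ and is therefore properly colored. But in $K_n$ every vertex of $A$ is joined to all of $B$, hence is incident with $|B|$ of these pairwise adjacent cut-edges, which must all get different colors; thus $|B|\leq t$, and symmetrically $|A|\leq t$. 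Then $n=|A|+|B|\leq 2t$, so $t\geq\lceil n/2\rceil$. Morally, a near-balanced bipartition is the only kind of cut in $K_n$ that can be made proper with few colors.

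For the upper bound I would first treat $n=2k$ even by an explicit circulant coloring. Put $V(K_{2k})=\mathbb{Z}_{2k}$ and color $\{i,j\}$ with the color $(i+j)\bmod k$, using $k$ colors. For $t\in\mathbb{Z}_{2k}$ let $A_t=\{t,t+1,\dots,t+k-1\}$ (indices mod $2k$), a set of size $k$; then $[A_t,\overline{A_t}]$ is an edge-cut of $K_{2k}$ with both sides of size $k$. The crux is that each such cut is in fact a proper edge-cut: at a vertex $i\in A_t$ the cut-edges go to $\overline{A_t}=\{t+k,\dots,t+2k-1\}$ and receive the colors $(i+t+s)\bmod k$ for $s=0,\dots,k-1$, which are $k$ consecutive residues mod $k$ and hence pairwise distinct, and symmetrically at the vertices of $\overline{A_t}$. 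Finally, given two distinct vertices $a,b$, after possibly swapping them we may write $b=a+d$ with $1\leq d\leq k$; then $A_{a+1}$ contains $b$ but not $a$, so $[A_{a+1},\overline{A_{a+1}}]$ is a proper $a$-$b$ edge-cut. Hence $c$ is a pd-coloring and $pd(K_{2k})\leq k$.

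For odd $n$ I would reduce to the even case by a simple monotonicity observation: if $c$ is a pd-coloring of $K_{n+1}$ and $v\in V(K_{n+1})$, then the restriction of $c$ to $K_n=K_{n+1}-v$ is again a pd-coloring. Indeed, given $x,y\in V(K_n)$ and a proper edge-cut $F$ separating them in $K_{n+1}$, the set $F\setminus E_v$ separates $x$ and $y$ in $K_n$: one checks $(K_{n+1}-v)-(F\setminus E_v)=(K_{n+1}-F)-v$, in which $x$ and $y$ still lie in different components, and $F\setminus E_v$ is still properly colored as a subset of $F$. Therefore $pd(K_n)\leq pd(K_{n+1})\leq\lceil (n+1)/2\rceil=\lceil n/2\rceil$.

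I expect the only real obstacle to be locating the right coloring for the upper bound; the lower bound, the verification of properness of the rotational cuts, and the odd-to-even reduction are all routine once the coloring is in hand. The guiding principle is that with only $k$ colors and $2k$ vertices there is no slack: in any separating proper cut every vertex must see all $k$ colors, which forces the cut to be balanced and forces the colors around each vertex to repeat with period $k$; the coloring $\{i,j\}\mapsto(i+j)\bmod k$ is designed to realize this simultaneously for the whole rotational family $\{[A_t,\overline{A_t}]\}_{t}$, which is what makes every pair separable.
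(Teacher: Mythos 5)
This theorem is imported into the paper as a cited result from \cite{BCJ}; the paper itself contains no proof of it, so there is nothing internal to compare your argument against. On its own merits, your proof is correct and complete. The lower bound is sound: any proper edge-cut $F$ separating $x$ from $y$ in $K_n$ must contain every edge of the bipartition $[A,B]$ determined by the component of $x$ in $K_n-F$, and the $|B|$ cut-edges at a vertex of $A$ are pairwise adjacent and hence rainbow, giving $|A|,|B|\le t$ and $n\le 2t$. The upper bound for $n=2k$ via the circulant coloring $c(\{i,j\})=(i+j)\bmod k$ works: for each $t$ the balanced cut $[A_t,\overline{A_t}]$ is proper because the $k$ cut-edges at any fixed vertex receive $k$ consecutive residues mod $k$, and the rotational family of cuts separates every pair since $b=a+d$ with $1\le d\le k$ lies in $A_{a+1}$ while $a$ does not. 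The odd case correctly reduces to the even case by the monotonicity $pd(K_n)\le pd(K_{n+1})$, justified by the observation that deleting a vertex from $K_{n+1}-F$ cannot merge components and that a subset of a proper edge-cut remains properly colored. The only point worth stating explicitly (and which your argument implicitly covers) is that $F\setminus E_v$ cannot fail to be an edge-cut of $K_n$, since $x$ and $y$ already lie in different components of $(K_{n+1}-F)-v$.
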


\begin{theorem}\upshape\cite{BCJ}\label{pd1}
Let $G$ be a nontrivial connected graph. Then $pd(G)=1$
if and only if for any two vertices
of $G$, there is a matching cut separating them.
\end{theorem}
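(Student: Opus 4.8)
The plan is to unwind the definitions and to observe that, when only one color is used, the notion of a proper edge-cut collapses exactly to that of a matching cut. Once this observation is in place, both implications are essentially immediate.

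For the forward direction I would assume $pd(G)=1$, so that $G$ admits a pd-coloring $c$ using only the color $1$. Fix two distinct vertices $x,y$ of $G$; by the definition of a pd-coloring there is an $x$-$y$ proper edge-cut $F$. Since every edge of $F$ has color $1$, the requirement that adjacent edges of $F$ receive distinct colors forces $F$ to contain no two adjacent edges, i.e.\ $F$ is a matching. Hence $F$ is an $x$-$y$ matching cut, which is what is wanted.

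For the converse I would suppose that for every pair of vertices of $G$ there is a matching cut separating them, and then simply color every edge of $G$ with color $1$. Given $x,y$, let $F$ be a matching cut separating them. Because $F$ is a matching it has no pair of adjacent edges, so the condition ``any two adjacent edges of $F$ have different colors'' is satisfied vacuously; thus $F$ is an $x$-$y$ proper edge-cut for this coloring. Therefore the all-$1$ coloring is a pd-coloring of $G$, giving $pd(G)\le 1$; since $G$ is nontrivial and connected we also have $pd(G)\ge 1$, so $pd(G)=1$.

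There is no genuine obstacle here, as the statement is really a reformulation of the definitions. The only point that deserves to be written out explicitly, so that the argument is airtight, is the coincidence between proper edge-cuts and matching cuts under a single-color edge-coloring, together with the remark that a matching $F$ makes the ``properness'' condition hold vacuously.
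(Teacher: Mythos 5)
Your proof is correct and is the natural definitional argument: with a single color available, the properness condition on an edge-cut is equivalent to its being a matching, which gives both implications at once. The paper itself cites this result from \cite{BCJ} without reproducing a proof, so there is nothing to compare against, but your argument is exactly the standard one.
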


\begin{theorem}\upshape\cite{BM}\label{perfectmatching} (Petersen's Theorem)
Every $3$-regular graph without cut edges has a perfect matching.
\end{theorem}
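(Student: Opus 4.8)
The plan is to deduce Petersen's Theorem from Tutte's $1$-factor theorem: a graph $G$ has a perfect matching if and only if $o(G-S)\le |S|$ for every $S\subseteq V(G)$, where $o(H)$ denotes the number of components of $H$ having an odd number of vertices. So let $G$ be a $3$-regular graph with no cut edge, fix an arbitrary $S\subseteq V(G)$, and let $G_1,\dots,G_k$ be the odd components of $G-S$; it suffices to show $k\le |S|$.

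First I would perform a parity count on each odd component. For a fixed $G_i$, let $m_i$ be the number of edges of $G$ joining $G_i$ to $S$. Summing degrees over the vertices of $G_i$ gives $3|V(G_i)| = 2|E(G_i)| + m_i$; since $|V(G_i)|$ is odd, the left-hand side is odd, hence $m_i$ is odd. Next I would invoke the bridgeless hypothesis: if $m_i = 1$, the unique edge between $G_i$ and $S$ would be a cut edge of $G$, a contradiction, so $m_i\ge 3$ for every $i$. On the other side, each vertex of $S$ has degree $3$, so at most $3|S|$ edges of $G$ meet $S$, and in particular $\sum_{i=1}^{k} m_i \le 3|S|$. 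Combining, $3k \le \sum_{i=1}^{k} m_i \le 3|S|$, whence $k\le |S|$. As $S$ was arbitrary, Tutte's condition holds and $G$ has a perfect matching.

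The argument is short, and the only place that needs care is the interaction between parity and bridgelessness: parity forces each $m_i$ to be odd, and the absence of cut edges then upgrades ``$m_i\ge 1$'' to ``$m_i\ge 3$'', which is precisely what makes the global edge count $\sum_{i=1}^{k} m_i\le 3|S|$ close the inequality. A proof avoiding Tutte's theorem would most naturally proceed by an augmenting-path or inductive argument, but that route is considerably more technical, so reducing to Tutte's theorem is the cleanest option.
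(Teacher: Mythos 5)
Your proof is correct: it is the standard derivation of Petersen's Theorem from Tutte's $1$-factor theorem, with the parity count $3|V(G_i)|=2|E(G_i)|+m_i$ forcing each $m_i$ odd, bridgelessness upgrading $m_i\ge 1$ to $m_i\ge 3$, and the degree bound $\sum_i m_i\le 3|S|$ closing the inequality (the case $S=\emptyset$ is handled automatically, since then no odd component can exist). The paper does not prove this statement at all---it simply cites it from Bondy and Murty---so there is nothing to compare against; your argument is the usual textbook proof and is complete.
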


For a simple connected graph $G$, if $\Delta(G)=1$, then $G$ is the graph $K_2$, a single edge.
If $\Delta(G)=2$, then $G$ is a path of order $n\geq 3$ or a cycle. By Theorems \ref{treepd} and \ref{cyclepd},
for a connected graph $G$ with $\Delta(G)\leq 2$, we have $pd(G)=1$ if and only if $G$ is a path or a cycle of order $n\geq 4$,
and $pd(G)=2$ if and only if $G$ is a triangle.

Next, we will present the proper disconnection numbers of graphs with maximum degree $3$. At first we give the proper disconnection
numbers of $3$-regular graphs.
\begin{lemma}\label{lpdnc}
If $G$ is a $3$-regular connected graph without cut edges, then $pd(G)\leq 2$.
\end{lemma}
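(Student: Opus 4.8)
The plan is to produce an explicit $2$-edge-coloring of $G$ that is a pd-coloring, using Petersen's Theorem (Theorem~\ref{perfectmatching}) as the starting point. Since $G$ is $3$-regular and has no cut edges, it has a perfect matching $M$. Color every edge of $M$ with color $1$ and every edge of $E(G)\setminus M$ with color $2$. Note that $E(G)\setminus M$ is a disjoint union of cycles (a $2$-regular spanning subgraph), so $G$ decomposes nicely. The goal is then to show that for every pair of distinct vertices $x,y$ there is a proper edge-cut separating them. I would first record the easy local observation: for any vertex $v$, the edge set $E_v$ consists of one edge of color $1$ (the $M$-edge at $v$) and two edges of color $2$, so $E_v$ is \emph{not} a proper edge-cut in general — hence I cannot simply use stars and must find genuine cuts.

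The key step is to separate an arbitrary pair $x,y$. I would argue as follows: since $G$ has no cut edges, it is $2$-edge-connected, and more usefully each vertex has degree $3$. For a single vertex $v$, consider cutting off $v$: a minimum $v$-to-rest cut is $E_v$ itself, which fails properness because it has two color-$2$ edges meeting at $v$. Instead I would look at the cycle $D$ of $E(G)\setminus M$ containing the $M$-partner or containing $v$: let $C$ be the cycle of the $2$-factor $E(G)\setminus M$ through $v$, with neighbors $v^-,v^+$ on $C$, and let $v'$ be the $M$-partner of $v$. The edge cut isolating $\{v\}$ in the subgraph obtained after rerouting is still problematic, so the cleaner route is to separate $x$ from $y$ by cutting along the $2$-factor. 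Concretely, for any edge $e=xy\in E(G)$ I would exhibit a proper $x$-$y$ edge-cut; for nonadjacent $x,y$ I would use connectivity to reduce to an adjacent-like situation or take a cut given by a small set of edges chosen so that no two share an endpoint \emph{and} no two have the same color at a shared endpoint. A robust choice: take the fundamental edge-cut $\partial(S)$ for a suitable vertex set $S$ with $x\in S$, $y\notin S$, chosen so that $G[S]$ and $G[V\setminus S]$ are connected and $|\partial(S)|$ is small; then within $\partial(S)$ the monochromatic classes are matchings provided no vertex is incident with two cut-edges of the same color, which I can arrange by choosing $S$ to be a union of segments of the $2$-factor cycles (so each cycle contributes at most one "color-$2$" edge on each side of a segment boundary) together with control of the $M$-edges crossing.

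I expect the main obstacle to be exactly this: showing one can \emph{always} find such an $S$ for every pair $x,y$, i.e., that the "bad" configuration — a vertex incident with two equally-colored cut edges — can be avoided. The danger is a color-$2$ vertex on the boundary of $S$ having both its cycle-neighbors on the opposite side. I would handle this by a local repair argument: if the natural cut has such a bad vertex $w$ (necessarily with its two color-$2$ edges crossing), then $w$'s $M$-edge lies inside one side; I move $w$ across the cut (replace $S$ by $S\triangle\{w\}$ or $S\cup\{w\}$), which changes the crossing pattern at $w$ from "two color-$2$ edges" to "one color-$1$ edge plus possibly new crossings at the other endpoints," and argue this strictly decreases a suitable potential (number of bad vertices, or total cut size) while keeping $x$ and $y$ separated — unless the repair would merge $x$ and $y$, in which case a parity/connectivity argument on the $2$-factor shows an alternative segment choice works. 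Once no bad vertex remains, each color class in $\partial(S)$ is a matching, so $\partial(S)$ is a proper $x$-$y$ edge-cut, giving $pd(G)\le 2$.
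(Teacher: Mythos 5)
There is a genuine gap, and it is fatal to the specific coloring you chose rather than merely to the cut-finding step. With every edge of $M$ colored $1$ and every edge of the $2$-factor $E(G)\setminus M$ colored $2$, consider a triangle $xyz$ that is one of the cycles of the $2$-factor. Any edge-cut separating $x$ from $y$ must contain the edge $xy$ (it is itself an $x$-$y$ path) and must also meet the path $xzy$, hence contains $xz$ or $zy$; either way the cut contains two adjacent edges of the triangle, both colored $2$. So under your coloring \emph{no} $x$-$y$ proper edge-cut exists, and no amount of local repair or clever choice of $S$ can fix this -- the coloring itself must be modified. This is exactly why the paper, after taking the same perfect-matching decomposition, recolors one edge of each triangle of the $2$-factor with the matching color, and why the bulk of its proof is a case analysis for same-triangle pairs (it must also treat the prism, two triangles joined by a matching, as a separate exceptional graph $G_0$ with an ad hoc coloring, since the recoloring trick breaks there). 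Your proposal never confronts the triangle case at all.

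Beyond that, the part of your argument that would carry the load -- the ``move a bad vertex across the cut and argue a potential decreases'' repair -- is only a plan, not a proof; you yourself flag it as the main obstacle and then do not resolve it (moving $w$ can disconnect a side of the cut, create new bad vertices at $w$'s neighbors, or merge $x$ with $y$, and none of these is ruled out). You also miss the two easy observations that make the paper's proof short outside the triangle case: if $x$ and $y$ lie in different cycles of the $2$-factor, then $M$ alone is an $x$-$y$ edge-cut and is proper because it is a matching; and if they lie in a common cycle of length at least $4$, then $M\cup\{e_1,e_2\}$ works, where $e_1,e_2$ are nonadjacent cycle edges taken from the two $x$-$y$ arcs, since each color class of this cut is a matching. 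I would suggest restructuring around these two cases and then doing the honest work on triangles.
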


\begin{figure}[htbp]
\centering
\includegraphics[width=0.3\textwidth]{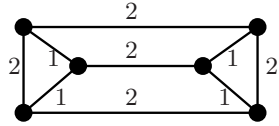}
\caption{The graph $G_0$}\label{figpd3regno}
\end{figure}

\begin{proof}
Let $G_0$ be a graph by connecting two triangles with $3$ matching (or independent) edges, and we color $G_0$
with two colors as shown in Figure \ref{figpd3regno}. Obviously, it is a proper disconnection coloring of $G_0$.
Now we consider $2$-edge-connected $3$-regular graphs $G$ except $G_0$. By Theorem \ref{perfectmatching},
there exists a perfect matching $M$ in $G$. We define an edge-coloring $c$ of $G$ as follows.
Let $c(M)=2$. If $E(G)\setminus M$ contains triangles, then we color one of the edges in each triangle by color $2$.
We then color the remaining edges by color $1$. Since $G\setminus M$ is the union of some disjoint cycles,
we denote these disjoint cycles by $C_1,C_2,\cdots C_t$. Let $x,y$ be two vertices of $G$.
If $x$ and $y$ belong to different cycles of $C_1,C_2,\cdots C_t$, then $M$ is an $x$-$y$ proper edge-cut.
If $x$ and $y$ belong to the same cycle $C_i$ ($i\in [t]$), then there are two cases to discuss.

\textbf{Case 1.} $|C_i|\geq 4$.

Since $|C_i|\geq 4$, there exist two $x$-$y$ paths $P_1,P_2$ in $C_i$. We choose two nonadjacent
edges $e_1, e_2$ respectively from $P_1, P_2$. Then $M\cup \{e_1,e_2\}$ is an edge-cut separating $x$ and $y$.
Since $c(M)=2$ and $c(e_1)=c(e_2)=1$, $M\cup \{e_1,e_2\}$ is an $x$-$y$ proper edge-cut.

\textbf{Case 2.} $|C_i|=3$.

Since $x,y\in C_i$, we can assume $C_i=xyz$. Let $N(x)=\{y,z,x_0\}$ and $N(x_0)=\{x,x_1,x_2\}$. Assume $x_0\in C_k,k\in [t]\setminus\{i\}$.

{\em Subcase 2.1.} $c(xy)=1$.

Assume $c(yz)=1$ and $c(xz)=2$. Note that $x_1\notin N(z)$ or $x_2\notin N(z)$,
without loss of generality, say $x_2\notin N(z)$.

For $|C_k|\geq 4$, we have $c(x_0x_2)=1$.
Then $E_{x_2}\setminus \{x_0x_2\}$ have different colors. So, $\{xy,xz,x_0x_1\}\cup E_{x_2}\setminus \{x_0x_2\}$ is an $x$-$y$ proper edge-cut.

For $|C_k|=3$, if $c(x_0x_1)\neq c(x_0x_2)$, we get that $\{xy,xz,x_0x_1,x_0x_2\}$ is an $x$-$y$ proper edge-cut.
Now consider $c(x_0x_1)=c(x_0x_2)=1$. If $x_1\in N(z)$, then $c(x_1z)=2$. Since $G\neq G_0$, we have $x_2\notin N(y)\cup N(z)$. So, $(E_y\setminus \{yz\})\cup\{xz,x_0x_1,x_1x_2\}$ is an $x$-$y$
proper edge-cut. If $x_1\notin N(z)$, then denote $E_{x_1}\setminus \{x_0x_1,x_1x_2\}$ by $e_1$ and denote $E_{x_2}\setminus \{x_0x_2,x_1x_2\}$ by $e_2$.
It is clear that $e_1,e_2\in M$. So, $c(e_1)=c(e_2)=2$. We get that $\{xy,xz,e_1,e_2\}$ is an $x$-$y$ proper edge-cut.

{\em Subcase 2.2.} $c(xy)=2$.

In Subcase $2.1$, if $c(xy)=1$, then the $x$-$y$ proper edge-cut is also an $x$-$z$ proper edge-cut. So, we have proved Subcase $2.2$.
\end{proof}

Let $H(v)$ be a connected graph with one vertex $v$ of degree two and the remaining vertices of degree three. We assume that
the neighbors of $v$ in $H(v)$ are $v_1$ and $v_2$, respectively. If $v_1$, $v_2$ are adjacent, then denote it by $H_1(v)$.
Otherwise, denote it by $H_2(v)$. Let $H_1'(v)$ be the graph obtained by replacing the vertex $v$ by a diamond.
Let $H_2'(v)$ be the graph obtained by replacing the path $v_1vv_2$ of $H_2(v)$ by an new edge $v_1v_2$; see Figure \ref{gpro}.
\begin{figure}[htbp]
\centering
\includegraphics[width=0.7\textwidth]{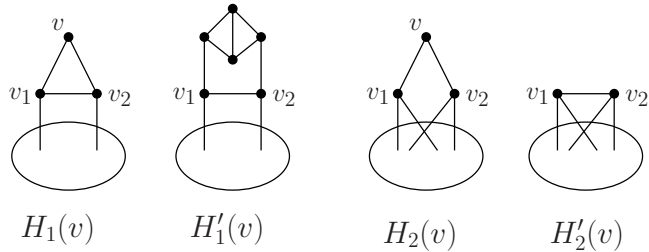}
\caption{The graph process}
\label{gpro}
\end{figure}

\begin{lemma}\label{pd3regular}
If $G$ is a $3$-regular graph of order $n$ $(n\geq 4)$, then $pd(G)\leq 2$.
\end{lemma}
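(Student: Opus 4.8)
The plan is to induct on the number of bridges (cut edges) of $G$, where we take $G$ to be connected (as $pd$ is only defined for connected graphs). If $G$ has no bridge, then $G$ is $3$-regular, connected and without cut edges, so Lemma~\ref{lpdnc} immediately gives $pd(G)\le 2$; this is the base case.

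Now assume $G$ has a bridge. Using the bridge-tree of $G$, choose a bridge $e_0=pq$ corresponding to an edge of that tree incident with a leaf, so that the component $G_1$ of $G-e_0$ containing $p$ is $2$-edge-connected; let $G_2$ be the other component, which contains $q$. Then $p$ is the unique vertex of degree two in $G_1$ and every other vertex of $G_1$ has degree three, so $G_1$ is a graph of the form $H(p)$; write $p_1,p_2$ for the neighbours of $p$. Similarly $q$ is the unique degree-two vertex of $G_2$, with neighbours $q_1,q_2$, but $G_2$ may still contain bridges. I would now pass to cubic graphs $\widehat{G_1}$ and $\widehat{G_2}$, where each $\widehat{G_i}$ is formed from $G_i$ as $H_1'$ if the two neighbours of its degree-two vertex are adjacent and as $H_2'$ otherwise. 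A short check shows that $\widehat{G_1}$ is again $2$-edge-connected: in the $H_1'$ case any bridge of $G_1-p$ must separate $p_1$ from $p_2$ (otherwise it would be a bridge of the $2$-edge-connected $G_1$), and the inserted diamond restores a $p_1$-$p_2$ connection, while in the $H_2'$ case suppressing a degree-two vertex creates no bridge. Hence Lemma~\ref{lpdnc} (or Theorem~\ref{t2} for the few tiny shapes it may take, such as $K_4$) yields a pd-coloring of $\widehat{G_1}$ with two colors, and since $\widehat{G_2}$ is cubic with strictly fewer bridges than $G$ (it has lost the bridge $e_0$ and the two edges at $q$ while gaining none), the induction hypothesis yields a pd-coloring of $\widehat{G_2}$ with two colors as well.

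Next I would pull these colorings back to $2$-colorings of $G_1$ and $G_2$: every edge not in the inserted gadget keeps its color, while the two edges of $G_i$ incident with its attachment vertex receive the two \emph{different} colors; the bridge $e_0$ is colored arbitrarily. This produces a $2$-coloring $c$ of $G$ for which $\{pp_1,pp_2\}$ and $\{qq_1,qq_2\}$ are matching cuts of $G$. To finish, one checks that $c$ is a pd-coloring by considering a pair $x,y\in V(G)$: if $x$ and $y$ lie on opposite sides of $e_0$ then $\{e_0\}$ separates them; if $x=p$ (or $x=q$) then $\{pp_1,pp_2\}$ (or $\{qq_1,qq_2\}$) is a matching cut isolating that vertex; and if $x,y$ lie in the same $G_i$ and neither is the attachment vertex, one takes a proper edge-cut of $\widehat{G_i}$ separating them and converts it into a proper edge-cut of $G_i$, which is automatically one of $G$ because $G_i$ is joined to the rest of $G$ only through $e_0$.

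I expect this last conversion to be the main obstacle. A proper edge-cut of $\widehat{G_i}$ may pass through the interior of the diamond (in the $H_1'$ case) or use the replacement edge $p_1p_2$ (in the $H_2'$ case), and it has to be rerouted along the path $p_1pp_2$ of $G_i$ --- in particular one must decide on which side of the cut the vertex $p$ lands --- without ever creating two adjacent cut edges of the same color. Choosing the colors of the two edges at the attachment vertex so that every such rerouting remains proper, and running through the handful of sub-cases for how a minimal cut can meet the gadget (together with the few exceptional forms of $\widehat{G_i}$, such as $K_4$ or the graph $G_0$ of Lemma~\ref{lpdnc}), is where the genuine work lies; everything else is bookkeeping on the bridge structure.
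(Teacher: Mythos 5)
Your overall architecture coincides with the paper's: split $G$ at a bridge $uv$ into pieces of the form $H(u)$ and $H(v)$, pass to the cubic graphs $H_1'$ or $H_2'$, get a two-color pd-coloring of those from Lemma~\ref{lpdnc} or from induction, pull it back to the pieces, and glue across the bridge. The only structural difference is the induction parameter (number of bridges rather than the order $n$), which is fine and lets you invoke Lemma~\ref{lpdnc} directly on the leaf piece. Two small slips: $\{pp_1,pp_2\}$ is not a matching cut (the two edges are adjacent), though it is a proper edge-cut under your coloring; and $K_4$ is already covered by Lemma~\ref{lpdnc}, so no appeal to Theorem~\ref{t2} is needed.

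The genuine gap is the step you yourself flag and then postpone: verifying that the pulled-back two-coloring of $H_1(p)$ is a pd-coloring. This is not bookkeeping; it is where all the content of the lemma lives. In the diamond case, a proper edge-cut $R$ of $\widehat{G_1}$ separating two vertices of $B=V(H_1(p))\setminus\{p,p_1,p_2\}$ may run through the interior of the diamond, and after collapsing the diamond back to $p$ the two sides are reconnected through $p$ unless one of $pp_1,pp_2$ is added to the cut; whether this addition (and the retained edges of $R$) stays proper depends on how the colors of the three edges $p_1w_1$, $p_1p_2$, $p_2w_2$ fall, and there is no single uniform assignment of colors to $pp_1,pp_2$ that works in all situations. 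The paper resolves this by a case split on whether $c'(e_1)=c'(e_2)$ or $c'(e_2)=c'(e_3)$ versus $c'(e_1)=c'(e_3)\neq c'(e_2)$, and in the latter case the pairs inside $\{p_1,p_2\}$ and the pairs with one vertex in $\{p_1,p_2\}$ and one in $B$ are handled by cuts such as $\{pp_2,\,p_1w_1,\,p_1p_2\}$ and $\{p_1w_1,\,p_2w_2\}$ that are built by hand, not obtained by rerouting a cut of $\widehat{G_1}$. Until that case analysis is written out, what you have is a correct plan rather than a proof. (By contrast, your sketch of the $H_2'$ case is essentially complete: if $p_1p_2$ lies in the cut, replace it by whichever of $pp_1,pp_2$ carries the same color.)
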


\noindent $Proof.$ We proceed by induction on the order $n$ of $G$. Since a $3$-regular graph of order $4$ is $K_4$ and $pd(K_4)=2$ from Theorem \ref{t2},
the result is true for $n=4$. Suppose that if $H$ is a $3$-regular graph of order $n$ $(n\geq 4)$, then $pd(G)\leq 2$.
Let $G$ be a $3$-regular graph of order $n+1$. We will show $pd(G)\leq 2$. If $G$ has no cut edge, then $pd(G)\leq 2$ from Lemma \ref{lpdnc}.
So, we consider $G$ having a cut edge, say $uv$ $(u,v\in V(G))$. We delete the cut edge $uv$, then there are two components containing $u$ and $v$,
respectively, say $G_1$, $G_2$. Since $G$ is $3$-regular, we have $|V(G_1)|\geq 5$ and $|V(G_2)|\geq 5$. Thus,
$5\leq|V(G_1)|\leq n-4$ and $5\leq|V(G_2)|\leq n-4$. Obviously, $G_1$ and $G_2$ are the graphs $H(u)$, $H(v)$, respectively.
We first show the following claims.
\begin{claim}\label{lpd2} \
$pd(H_1(u))\leq 2$.
\end{claim}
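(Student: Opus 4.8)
The plan is to prove Claim~\ref{lpd2} by relating $H_1(u)$ to the smaller $3$-regular graph $H_1'(u)$ obtained by replacing the degree-two vertex $u$ and the triangle on $\{u,u_1,u_2\}$ by a diamond (a $K_4$ minus an edge) on the four vertices $u_1, u_2$ and two new vertices. Since $H_1'(u)$ is $3$-regular of order at most $n$, the induction hypothesis of Lemma~\ref{pd3regular} gives a pd-coloring $c'$ of $H_1'(u)$ with at most $2$ colors. First I would fix notation: write $N_{H_1(u)}(u)=\{u_1,u_2\}$ with $u_1u_2\in E(H_1(u))$, and let the diamond in $H_1'(u)$ have vertices $u_1,u_2,p,q$ with edges $u_1p,u_1q,u_2p,u_2q,pq$ (the missing edge being $u_1u_2$). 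Then I would transfer $c'$ back to a coloring $c$ of $H_1(u)$: the edges of $H_1(u)$ that are not in the triangle $uu_1u_2$ correspond bijectively to the edges of $H_1'(u)$ outside the diamond, so keep those colors; and color the three triangle edges $uu_1, uu_2, u_1u_2$ so that the local picture at $u_1$ and $u_2$ stays ``proper-friendly''. The natural choice is to set $c(u_1u_2)=c'(pq)$ (or the opposite color if needed) and give $uu_1, uu_2$ colors so that no bad monochromatic adjacency is forced.

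The core of the argument is a case analysis showing every pair of vertices $x,y$ of $H_1(u)$ admits an $x$-$y$ proper edge-cut under $c$. For a pair $x,y$ neither of which lies in $\{u,u_1,u_2\}$, an $x$-$y$ proper edge-cut $F'$ in $H_1'(u)$ translates directly: if $F'$ avoids the diamond, it is already an $x$-$y$ proper edge-cut in $H_1(u)$; if $F'$ uses diamond edges, I would argue that the corresponding small edge-cut structure in $H_1(u)$ (the triangle plus the two ``bridge'' edges joining $\{u_1,u_2\}$ to the rest) can be chosen as a proper edge-cut of size at most two colors, using that a triangle separating off $\{u,u_1,u_2\}$ from the rest consists of the two edges leaving $u_1$ and $u_2$ respectively — these are nonadjacent, hence automatically a matching cut. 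For pairs where one of $x,y$ is in $\{u,u_1,u_2\}$, I would exhibit the cut explicitly: to separate $u$ from everything, the two edges $u_1 u_1'$ and $u_2 u_2'$ leaving the triangle form a matching cut (they're independent), so they work for any target; to separate $u_1$ from $u_2$, use the two edges $uu_2$ and $u_1u_2$ after deleting $u_1u_2$, or more carefully pick a small proper cut around $u_1$ using the fact that $E_{u_1}$ has only three edges and at most two share a color.

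The subtlety — and the step I expect to be the main obstacle — is guaranteeing properness at the vertices $u_1$ and $u_2$ simultaneously when we re-insert the triangle. In $H_1'(u)$ the vertices $u_1,u_2$ each have degree three inside/around the diamond, and their incident colors are constrained by $c'$ being a pd-coloring; in $H_1(u)$ these same vertices are incident to $uu_1$ (resp.\ $uu_2$), $u_1u_2$, and one ``outside'' edge. I must assign $c(uu_1), c(uu_2), c(u_1u_2)\in\{1,2\}$ so that (i) no cut we need to use contains two adjacent edges of the same color at $u_1$ or $u_2$, and (ii) the outside edges at $u_1,u_2$ still function in the transferred cuts. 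Because only two colors are available and three edges meet at each of $u_1,u_2$, some monochromatic adjacency is unavoidable there; the point is to place it on the edge $u_1u_2$-to-outside pairing that never needs to appear together in a required cut. I would handle this by observing that any minimal $x$-$y$ cut using edges at $u_1$ uses at most two of its three incident edges, and then choosing the triangle colors (possibly splitting into the subcase $c'(u_1p)=c'(u_1q)$ versus $c'(u_1p)\neq c'(u_1q)$, and similarly at $u_2$) so that every such pair is bichromatic. A clean way to organize this is to show directly that $\{uu_1, uu_2\}$ is always available as an $x$-$y$ matching cut whenever $\{x,y\}\cap\{u\}\neq\emptyset$, and for the remaining pairs to always route through cuts disjoint from the triangle, which exist by the induction hypothesis applied to $H_1'(u)$ together with the observation that the natural ``diamond cuts'' in $H_1'(u)$ can be pushed outward to become the two independent bridge edges at $u_1,u_2$ in $H_1(u)$.
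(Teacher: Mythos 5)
Your overall strategy coincides with the paper's: pass to the $3$-regular graph $H_1'(u)$, invoke the induction hypothesis of Lemma~\ref{pd3regular} to obtain a $2$-coloring $c'$ together with proper edge-cuts $R_{xy}$, pull the coloring back to $H_1(u)$, and repair things around the triangle $uu_1u_2$. But what you have written is a plan rather than a proof. The step you yourself flag as ``the main obstacle'' --- choosing the colors of $uu_1$, $uu_2$ (and $u_1u_2$) so that every required cut remains proper at $u_1$ and $u_2$ with only two colors --- is precisely the entire content of the argument, and you leave it at ``I would handle this by \dots choosing the triangle colors \dots so that every such pair is bichromatic.'' The paper resolves it by an explicit case split on $c'(u_1w_1)$, $c'(u_1u_2)$, $c'(u_2w_2)$ (either two consecutive ones agree, or the two outer ones agree and differ from the middle one), fixes $c(uu_1)=2$, $c(uu_2)=1$, and exhibits a concrete proper cut for each type of pair $\{x,y\}$. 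Until that case analysis is carried out, the claim is not established. Note also that your $H_1'(u)$ (diamond on $u_1,u_2,p,q$ with $u_1u_2$ deleted) is not the paper's (diamond on new vertices replacing $u$, with $u_1u_2$ retained); the paper's version lets $c(u_1u_2)$ be inherited from $c'$, which is what drives its case split.

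Beyond the deferral, several of the concrete cuts you do commit to are wrong, which indicates the repair is less routine than you suggest. First, $\{u_1w_1,u_2w_2\}$ isolates $\{u,u_1,u_2\}$, so it separates $u$ from any vertex outside the triangle but certainly not ``from any target'': it does not separate $u$ from $u_1$ or $u_2$ (the paper uses $E_u=\{uu_1,uu_2\}$ there, which is why those two edges must receive different colors; $E_u$ is not a matching, only a proper cut). Second, $\{uu_2,u_1u_2\}$ is in general not a $u_1$-$u_2$ edge-cut at all, since $u_2$ stays joined to the rest of the graph via $u_2w_2$; the paper instead uses the three-edge cut $\{uu_2,u_1w_1,u_1u_2\}$, which isolates $\{u,u_1\}$ and is proper under its chosen colors. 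Third, your ``push the diamond cut outward to the two bridge edges'' move is not properness-safe: if the transferred cut already contains an edge at $w_1$ with the same color as $u_1w_1$, adding $u_1w_1$ creates a monochromatic adjacent pair, and with only two colors there is no recoloring escape. The paper avoids this entirely by never adding $u_1w_1$ or $u_2w_2$ to a transferred cut; it only ever adds $uu_1$ or $uu_2$, whose other incident edges are triangle edges whose colors are controlled by the case split.
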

\noindent $Proof.$ Let $u_1$ and $u_2$ be two neighbors of $u$ in $H_1(u)$. Assume that the neighbors of $u_1$ and $u_2$ in $H_1(u)$ are $\{u, u_2, w_1\}$, $\{u, u_1, w_2\}$,
respectively. The edges $u_1w_1$, $u_1u_2$ and $u_2w_2$ are denoted by $e_1,e_2,e_3$. Let $A=\{u,u_1,u_2\}$ and $B=V(H_1(u))\setminus A$.
Since $|V(G_1)|\leq n-4$, we have $|V(H_1'(u))|\leq n-1$. Obviously, $H_1'(u)$ is $3$-regular. Then $pd(H_1'(u))\leq 2$ by the induction hypothesis.
Let $c'$ be a proper disconnection coloring of $H_1'(u)$ with two colors. For any two vertices $p$ and $q$ of $H_1'(u)$,
let $R_{pq}$ be a $p$-$q$ proper edge-cut of $H_1'(u)$. There are two cases to discuss.

\textbf{Case 1}. $c'(e_1)=c'(e_2)$ or $c'(e_2)=c'(e_3)$.

Without loss of generality, we assume $c'(e_1)=c'(e_2)=1$. We define an edge-coloring $c$ of $H_1(u)$ as follows.
Let $c(uu_1)=2$, $c(uu_2)=1$ and $c(e)=c'(e)$ $(e\in E(H_1(u))\setminus\{uu_1,uu_2\})$. Let $x$ and $y$ be two vertices of $H_1(u)$.
If they are both in $H_1(u)\setminus\{u\}$, then $(R_{xy}\cap E(H_1(u)))\cup\{uu_1\}$ is an $x$-$y$ proper edge-cut of $H_1(u)$.
If $x=u$ or $y=u$, then $E_u$ is an $x$-$y$ proper edge-cut of $H_1(u)$. So, $c$ is a proper disconnection coloring of $H_1(u)$.
Thus, $pd(H_1(u))\leq 2$.

\textbf{Case 2}. $c'(e_1)=c'(e_3)\neq c'(e_2)$.

Assume $c'(e_1)=c'(e_3)=1$ and $c'(e_2)=2$. Define an edge-coloring $c$ of $H_1(u)$ as follows.
Let $c(uu_1)=2$, $c(uu_2)=1$ and $c(e)=c'(e)$ $(e\in E(H_1(u))\setminus\{uu_1,uu_2\})$.
Let $x$ and $y$ be two vertices of $H_1(u)$. If $x=u$ or $y=u$, then $E_u$ is an $x$-$y$ proper edge-cut of $H_1(u)$.
If $x,y\in A\setminus\{u\}$, then $\{uu_2,e_1,e_2\}$ is an $x$-$y$ proper edge-cut of $H_1(u)$.
If $x\in A\setminus\{u\},y\in B$ or $x\in B, y\in A\setminus\{u\}$, then $\{e_1,e_3\}$ is an $x$-$y$ proper edge-cut of $H_1(u)$.
Considering $x,y\in B$, if $e_1,e_3\notin R_{xy}$, then $(R_{xy}\cap E(H_1(u)))\cup\{uu_2\}$ is an $x$-$y$ proper edge-cut of $H_1(u)$.
Otherwise, i.e., $e_1\in R_{xy}$ or $e_3\in R_{xy}$, then $R_{xy}\cap E(H_1(u))$ is an $x$-$y$ proper edge-cut of $H_1(u)$.
So, $c$ is a proper disconnection coloring of $H_1(u)$. Thus, $pd(H_1(u))\leq 2$.

\begin{claim}\label{lpd2} \
$pd(H_2(u))\leq 2$.
\end{claim}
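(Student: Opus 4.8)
The plan is to repeat the strategy of the previous claim (the $H_1(u)$ case): build a smaller $3$-regular graph from $H_2(u)$, apply the induction hypothesis to it, and lift its pd-coloring back to $H_2(u)$. Here the smaller graph is $H_2'(u)$, obtained from $H_2(u)$ by deleting $u$ and adding the edge $u_1u_2$; this keeps the graph simple precisely because $u_1,u_2$ are non-adjacent in $H_2(u)$, and $H_2'(u)$ is connected and $3$-regular of order $|V(H_2(u))|-1\le n-5$, which is at least $4$ and strictly less than $n+1$. Hence the induction hypothesis (applied to all $3$-regular graphs of smaller order) yields a pd-coloring $c'$ of $H_2'(u)$ using colors $\{1,2\}$.

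Next I would define a coloring $c$ of $H_2(u)$ by setting $c(uu_1):=c'(u_1u_2)$, letting $c(uu_2)$ be the other color, and keeping $c(e):=c'(e)$ for every remaining edge (the edge sets $E(H_2(u))\setminus\{uu_1,uu_2\}$ and $E(H_2'(u))\setminus\{u_1u_2\}$ coincide, so this is well defined). To verify that $c$ makes $H_2(u)$ proper disconnected, I would check, for a pair of distinct vertices $x,y$: if $u\in\{x,y\}$, then $E_u=\{uu_1,uu_2\}$ is an edge-cut isolating $u$, and it is proper since $c(uu_1)\ne c(uu_2)$; if $x,y\neq u$, take an $x$-$y$ proper edge-cut $R$ of $H_2'(u)$. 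When $u_1u_2\notin R$, the set $R$ already lies in $E(H_2(u))$, and since $H_2(u)-R$ is exactly $H_2'(u)-R$ with the edge $u_1u_2$ subdivided by $u$, the set $R$ still separates $x$ and $y$ and, no new pair of its edges having become adjacent, it is still a proper edge-cut. When $u_1u_2\in R$, I would use instead $R':=(R\setminus\{u_1u_2\})\cup\{uu_1\}$.

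For $R'$ two things need checking. First, that $R'$ still separates $x$ and $y$: in $H_2(u)-R'$ the vertex $u$ is incident only with $uu_2$, and contracting that pendant edge identifies $H_2(u)-R'$ with $H_2'(u)-R$, so the connectivity among the vertices other than $u$ — in particular between $x$ and $y$ — is unchanged. Second, that $R'$ is proper: the only edges of $R'$ that can be adjacent to the new edge $uu_1$ are $uu_2$ (which is not in $R'$) and the two edges of $H_2'(u)$ incident with $u_1$ other than $u_1u_2$; since a proper edge-cut colored with two colors has at most two edges at any vertex, at most one of those two edges belongs to $R$, and that one — being adjacent to $u_1u_2$ inside the proper cut $R$ — has color different from $c'(u_1u_2)=c(uu_1)$. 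Hence $R'$ is an $x$-$y$ proper edge-cut of $H_2(u)$, and $c$ is a pd-coloring, giving $pd(H_2(u))\le 2$.

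I expect the only delicate step to be the subcase $u_1u_2\in R$: one must re-route a cut of $H_2'(u)$ through the subdivision path $u_1uu_2$ while simultaneously keeping the cut property and properness; the pendant-edge contraction settles the former and the two-color degree bound on $R$ at $u_1$ settles the latter. The rest is bookkeeping parallel to the $H_1(u)$ case.
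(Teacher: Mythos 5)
Your proposal is correct and follows essentially the same route as the paper: pass to the $3$-regular graph $H_2'(u)$ (delete $u$, add $u_1u_2$), apply the induction hypothesis, give $uu_1$ the color of $u_1u_2$ and $uu_2$ the other color, use $E_u$ for pairs involving $u$, and otherwise lift an $x$-$y$ proper cut $R$ of $H_2'(u)$, substituting $uu_1$ for $u_1u_2$ when $u_1u_2\in R$. Your extra verification that the substituted cut still separates $x,y$ and remains proper (via the two-color degree bound at $u_1$) is exactly the detail the paper leaves implicit.
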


\begin{proof}
Assume that the neighbors of $u$ in $H_2(u)$ are $u_1$ and $u_2$.
Since $|V(H_2'(u))|<|V(G_2)|$ and $H_2'(u)$ is $3$-regular,
$pd(H_2'(u))\leq 2$ by the induction hypothesis. Let $c'$ be a proper disconnection coloring of $H_2'(u)$ with two colors.
We define an edge-coloring $c$ of $H_2(u)$ as follows: $c(uu_1)=1, c(uu_2)=2$ and $c(e)=c'(e)$ $(e\in E(H_2(u))\setminus\{uu_1,uu_2\})$.
Assume $c'(u_1u_2)=c(uu_i)$ ($i=1$ or $2$).
Then for any two vertices $x$ and $y$ of $H_2(u)$, if $x=u$ or $y=u$, then $E_u$ forms an $x$-$y$ proper edge-cut.
Otherwise, assume that the $x$-$y$ proper edge-cut in $H_2'(u)$ is $R$.
If $u_1u_2\notin R$, then $R$ is an $x$-$y$ proper edge-cut.
If $u_1u_2\in R$, then $(R\cup\{uu_i\})\setminus\{u_1u_2\}$ is an $x$-$y$ proper edge-cut.
So, $c$ is a proper disconnection coloring of $H_2(u)$. Thus, $pd(H_2(u))\leq 2$.

So, from the above claims we have $pd(G_1)\leq 2$. Similarly, we have $pd(G_2)\leq 2$.
Then, there exists a proper disconnection coloring $c_0$ of $G_1\cup G_2$ with two colors.
Now we assign color $1$ to the cut edge $uv$. It is a proper disconnection coloring of $G$.
So, $pd(G)\leq 2$.
\end{proof}

A \emph{block} of a graph $G$ is a maximal connected subgraph of $G$ that has no cut vertex. It is
obvious that a block is a $K_2$ or a 2-connected subgraph with at least three vertices.
Let $\{B_1,B_2, ...,B_t\}$ be the set of blocks of $G$.

\begin{lemma}\upshape\cite{BCJ}\label{pdblock}
Let $G$ be a nontrivial connected graph. Then $\mathit{pd}(G)=\max\{ \mathit{pd}(B_i)|i=1,2,\ldots, t \}$.
\end{lemma}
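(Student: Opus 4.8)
The plan is to establish the two inequalities $pd(G)\ge\max_i pd(B_i)$ and $pd(G)\le\max_i pd(B_i)$ separately: the first by restricting a pd-coloring of $G$ to a single block, and the second by pasting together pd-colorings of the individual blocks. Throughout I would use the standard description of a connected graph as its blocks glued along cut vertices, so that $G$ is obtained from any one block $B_i$ by attaching connected subgraphs (``lobes''), each meeting $B_i$ in a single cut vertex.

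For the lower bound, I would start from a pd-coloring $c$ of $G$ with $pd(G)$ colors, fix a block $B_i$ and two vertices $x,y\in V(B_i)$, take an $x$-$y$ proper edge-cut $F$ of $G$, and consider $F_i:=F\cap E(B_i)$. Every $x$-$y$ path inside $B_i$ is also an $x$-$y$ path of $G$, hence meets $F$, and all of its edges lie in $B_i$, so the meeting edge lies in $F_i$; thus $F_i$ separates $x$ and $y$ in $B_i$. It is a proper edge-cut of $B_i$ because two adjacent edges of $F_i$ are adjacent edges of $F$ and therefore get distinct colors. Hence $c|_{E(B_i)}$ is a pd-coloring of $B_i$ with at most $pd(G)$ colors, giving $pd(B_i)\le pd(G)$ for each $i$.

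For the upper bound, set $k=\max_i pd(B_i)$, fix for each block $B_i$ a pd-coloring $c_i:E(B_i)\to[k]$ (some colors possibly unused), and note that since the edge sets of the blocks partition $E(G)$, the $c_i$ assemble to a single coloring $c:E(G)\to[k]$. To see that $c$ makes $G$ proper disconnected, fix $x,y\in V(G)$. If some block $B_i$ contains both, let $F\subseteq E(B_i)$ be an $x$-$y$ proper edge-cut from $c_i$; since each lobe attached to $B_i$ meets $B_i$ in one vertex, any walk in $G-F$ between two vertices of $B_i$ can be shortcut to a walk in $B_i-F$, so the partition of $V(B_i)$ induced by $G-F$ agrees with that induced by $B_i-F$, and $F$ still separates $x$ and $y$ in $G$; it is proper in $G$ because all its edges lie in $B_i$, where $c=c_i$ and edge-adjacency is inherited. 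If no block contains both $x$ and $y$, consider the path between them in the block--cutpoint tree; let $B_1$ be its first block and $w_1$ the first cut vertex after $B_1$. Then $x,w_1\in V(B_1)$ with $x\ne w_1$, and $y$ lies in the part of $G$ hanging off $B_1$ at $w_1$. Applying $c_{B_1}$ gives an $x$-$w_1$ proper edge-cut $F\subseteq E(B_1)$; since that part of $G$ attaches to $B_1$ only at $w_1$ and $F$ separates $x$ from $w_1$ inside $B_1$, $F$ separates $x$ from $y$ in $G$, and as before $F$ is proper in $G$. Thus $c$ is a pd-coloring and $pd(G)\le k$.

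I expect the only delicate points to be the structural claims invoked above, not any computation: that a subset of a proper edge-cut which remains an edge-cut is again proper (immediate from inherited adjacency), that attaching lobes at cut vertices never reconnects vertices separated inside a block, and that the block $B_1$ and cut vertex $w_1$ with the stated ``one-sided'' position of $y$ exist and satisfy $x\ne w_1$. All of these are routine consequences of the block--cutpoint tree structure, so the main work is in stating the lobe/side facts precisely; once they are in place, both inequalities follow as above.
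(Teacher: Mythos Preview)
The paper does not supply a proof of this lemma; it is quoted from \cite{BCJ} without argument and used only as a tool in the proof of Theorem~\ref{pd26}. There is therefore no in-paper proof to compare against.

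Your proposal is the standard two-inequality argument and is correct. Restricting a pd-coloring of $G$ to a block and intersecting any $x$-$y$ proper edge-cut with $E(B_i)$ gives the lower bound, and gluing block pd-colorings along cut vertices gives the upper bound, with the block--cutpoint tree supplying the needed separation structure. The points you flag as delicate---that excursions outside a block return through the same cut vertex, so an edge-cut inside $B_i$ remains an edge-cut in $G$, and that the first cut vertex $w_1$ on the block--cut path satisfies $x\ne w_1$ even when $x$ is itself a cut vertex---are handled correctly.
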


\begin{theorem}\label{pd26}
If $G$ is a graph of order $n$ with maximum degree $\Delta(G)=3$, then $pd(G)\leq 2$.
Particularly, if $G$ satisfies the condition of Theorem \ref{pd1}, then $pd(G)=1$;
otherwise, $pd(G)=2$.
\end{theorem}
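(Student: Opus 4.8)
The plan is to reduce the problem to blocks and then to handle the essential case — a $2$-connected block that is not $3$-regular — by a doubling trick that turns it into a $3$-regular graph. By Lemma~\ref{pdblock} we have $pd(G)=\max_i pd(B_i)$, so it suffices to prove $pd(B)\le 2$ for every block $B$. A block is either $K_2$ or a $2$-connected subgraph on at least three vertices; in the latter case its minimum degree is at least $2$ and, since it is a subgraph of $G$, its maximum degree is at most $3$, so every vertex of $B$ has degree $2$ or $3$. If $B=K_2$, then $pd(B)=1$ by Theorem~\ref{treepd}. If $B$ is $3$-regular, then $|V(B)|\ge 4$ and $pd(B)\le 2$ by Lemma~\ref{pd3regular}. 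It remains to treat a $2$-connected block $B$ on at least three vertices that is not $3$-regular, i.e.\ one having a vertex of degree $2$ (this includes every cycle).

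For such a $B$, let $W$ be the nonempty set of its degree-$2$ vertices, take two vertex-disjoint copies $B_1,B_2$ of $B$, and for each $w\in W$ add one edge joining the copy of $w$ in $B_1$ to the copy of $w$ in $B_2$. The resulting graph $\widehat{B}$ is simple, connected (each $B_i$ is connected and at least one edge joins them), and $3$-regular, with $|V(\widehat B)|=2|V(B)|\ge 6$; hence $pd(\widehat B)\le 2$ by Lemma~\ref{pd3regular}. Fix a pd-coloring of $\widehat B$ with two colors and let $c$ be its restriction to the edges of $B_1$, viewed as an edge-coloring of $B$. The crux is that $c$ is a pd-coloring of $B$. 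Indeed, given distinct vertices $x,y$ of $B_1$, pick an $x$-$y$ proper edge-cut $F$ of $\widehat B$, let $X$ be the vertex set of the component of $\widehat B-F$ that contains $x$, and put $X'=X\cap V(B_1)$. Then $X'$ is a nonempty proper subset of $V(B_1)$ (it contains $x$ and misses $y$), and the set $F'$ of edges of $B_1$ having exactly one end in $X'$ is contained in $F$; therefore $F'$ is an edge-cut of $B_1$ that separates $x$ and $y$ and in which any two adjacent edges receive distinct colors — the latter property being inherited from $F$. Thus $F'$ is an $x$-$y$ proper edge-cut of $B_1$, so $pd(B)\le 2$, and altogether $pd(G)\le 2$.

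The dichotomy follows immediately from Theorem~\ref{pd1}. If every pair of vertices of $G$ is separated by some matching cut, then $pd(G)=1$ by that theorem. Otherwise $pd(G)\ne 1$, and since $pd(G)\ge 1$ always while $pd(G)\le 2$ by the bound just proved, we conclude $pd(G)=2$. I expect the main obstacle to be the restriction step above — verifying that shrinking a proper edge-cut of the double $\widehat B$ down to a single copy still yields a proper edge-cut separating the prescribed pair of vertices; one must also take a little care to confirm that $\widehat B$ is genuinely simple, connected, and $3$-regular so that Lemma~\ref{pd3regular} is applicable. The remaining steps — the block reduction and the $K_2$ and $3$-regular cases — are routine consequences of the results already established.
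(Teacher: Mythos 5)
Your proof is correct, and it follows the same overall strategy as the paper: reduce via Lemma~\ref{pdblock}, complete the remaining pieces to a $3$-regular graph, invoke Lemma~\ref{pd3regular}, and pull the resulting $2$-coloring back. The differences are in the execution. The paper strips pendant edges to get a graph $G'$ of minimum degree at least $2$, attaches a fixed gadget $H$ (with a degree-$2$ key vertex) to every degree-$2$ vertex to obtain a $3$-regular supergraph $G''$, and then simply asserts ``since $G'$ is a subgraph of $G''$, $pd(G')\le 2$.'' You instead work block by block and make each deficient block $3$-regular by doubling it and matching up the degree-$2$ vertices. More importantly, you supply the argument the paper omits: that restricting a pd-coloring of the $3$-regular supergraph to a connected subgraph containing the given pair still yields a proper edge-cut separating them, via intersecting the component of $x$ with the subgraph and taking the induced edge boundary, which is a subset of the original cut and hence inherits the properness condition. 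That monotonicity step is exactly where the paper's proof is thinnest, so your version is, if anything, the more complete one; the choice of completion gadget (doubling versus attaching $H$) is immaterial.
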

\begin{proof}
\begin{figure}[h]
    \centering
    \includegraphics[width=100pt]{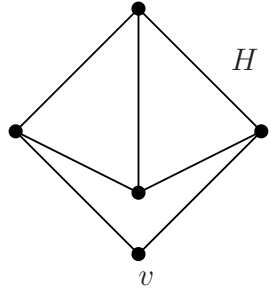}\\
    \caption{The graph H.} \label{1}
\end{figure}
If $G$ is a tree, then $pd(G)=1$ by Theorem \ref{treepd}. Suppose $G$ is not a tree. Let $H$ be a graph as shown in Figure \ref{1},
where $v$ is called the {\em key vertex} of $H$.
Suppose $G$ is a graph with maximum degree three. Let $G'$ be a graph obtained from $G$ by deleting pendent edges one by one.
Then $\Delta(G')\leq 3$ and $pd(G)=pd(G')$ by Lemma \ref{pdblock}.
Let $\{u_1,\cdots,u_t\}$ be the set of $2$-degree vertices in $G'$ and $H_1,\cdots,H_t$ be $t$ copies of $H$ such that the key vertex of
$H_i$ is $v_i$ ($i\in [t]$). We construct a new graph $G''$ obtained by connecting $v_i$ and $u_i$ for each $i\in[t]$.
Then $G''$ is a $3$-regular graph. By Lemma \ref{pd3regular}, $pd(G'')\leq2$. Since $G'$ is a subgraph of $G''$, $pd(G')\leq2$.
\end{proof}

\begin{theorem}
Let $G$ be a connected graph with maximum degree $Delta=3$ such that the set of vertices with degree $3$ in $G$ forms an independent set.
If $G$ contains a triangle or $K_{2,3}$, then $pd(G)=2$; otherwise, $pd(G)=1$.
\end{theorem}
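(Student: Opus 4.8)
The plan is to combine the bound $pd(G)\le 2$ from Theorem~\ref{pd26} with the criterion of Theorem~\ref{pd1}, so the whole statement reduces to two claims: (a) if $G$ contains a triangle or a $K_{2,3}$, then some pair of vertices is separated by no matching cut, whence $pd(G)=2$; and (b) if $G$ contains neither, then every pair of vertices is separated by some matching cut, whence $pd(G)=1$. Throughout I would use the consequence of the hypothesis that \emph{every neighbor of a degree-$3$ vertex has degree $2$} (otherwise two degree-$3$ vertices would be adjacent), and in particular that the minimum degree of a $2$-connected induced subgraph of $G$ is $2$.

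For (a): in a triangle $abc$ at most one vertex has degree $3$, so two of them, say $b,c$, have degree $2$ with $N(b)=\{a,c\}$ and $N(c)=\{a,b\}$. Any edge-cut separating $b$ and $c$ must contain $bc$ (else $b\sim c$) and, after $bc$ is deleted, must still destroy the path $b\,a\,c$, hence contains $ab$ or $ac$; in either case it contains two adjacent edges, so is not a matching. Thus no matching cut separates $b$ and $c$. If $G$ contains a $K_{2,3}$ with parts $\{u,v\}$, $\{p,q,r\}$, then $u,v$ have degree $3$ and are joined by three internally disjoint paths $u\,p\,v$, $u\,q\,v$, $u\,r\,v$; any edge-cut separating $u$ and $v$ meets each path, and a short check on whether one picks $up$ or $pv$ from the first path shows that any selection from the six edges $up,pv,uq,qv,ur,rv$ hitting all three paths contains two adjacent edges. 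So no matching cut separates $u$ and $v$. In both cases Theorem~\ref{pd1} gives $pd(G)\ne 1$, hence $pd(G)=2$ by Theorem~\ref{pd26}.

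For (b) I would first reduce to the $2$-connected case: by Lemma~\ref{pdblock} we have $pd(G)=\max_i pd(B_i)$ over the blocks $B_i$; a $K_2$-block has $pd=1$ (Theorem~\ref{treepd}) and a cycle-block has length $\ge 4$ since $G$ is triangle-free, hence $pd=1$ (Theorem~\ref{cyclepd}); and each $2$-connected block of maximum degree $3$ again satisfies all hypotheses, blocks being induced subgraphs. So assume $G$ is $2$-connected with $\Delta(G)=3$, and observe that a vertex set $S$ induces a matching cut exactly when, coloring $S$ black and $V\setminus S$ white, every degree-$3$ vertex has at least two same-color neighbors and every degree-$2$ vertex at least one. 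For a fixed pair $x\ne y$ I would exhibit such an $S$ (or its complement) with $x\in S$, $y\notin S$, by isolating a small set near $x$ or near $y$: for instance $S=\{u,u'\}$ for an edge $uu'$ with $d(u),d(u')\le 2$; $S=N[u]$ for a degree-$3$ vertex $u$ whose three second-neighbors are pairwise distinct; $S=\{u,x_1,x_2,w\}$ (a $4$-cycle) when $u$ has degree $3$ and two of its neighbors $x_1,x_2$ share their second neighbor $w$; and analogous short subpaths for degree-$2$ vertices lying on a path between two degree-$3$ vertices. In each case the no-triangle hypothesis (adjacent vertices have no common neighbor) forces the edge boundary of $S$ to be a matching, while the no-$K_{2,3}$ hypothesis together with $2$-connectivity rules out the degeneracies (a vertex with three common neighbors with $u$; a $4$-cycle attached to the rest of $G$ at a single vertex) that would otherwise spoil these sets. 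One then checks, organizing by $d(x)$, $d(y)$ and the degrees of their neighbors, that some construction always places $x$ and $y$ on opposite sides.

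The main obstacle is part (b), specifically the bookkeeping of the degenerate cases: when $x$ or $y$ has degree $3$ with two neighbors sharing a second neighbor, when $x$ and $y$ both have degree $3$ (hence are nonadjacent), or when $x,y$ are distinct neighbors of a common degree-$3$ vertex. Verifying that the explicit sets above — or short propagations of the black/white coloring along paths of degree-$2$ vertices — remain valid in all of these, and that the two forbidden subgraphs are used exactly where part (a) shows they must be, is where the real work lies; the converse direction and the reduction to blocks are routine.
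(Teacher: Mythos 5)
Your part (a) is sound and in fact more detailed than the paper's, which simply asserts that a triangle or a $K_{2,3}$ contains a pair of vertices admitting no matching cut; your pigeonhole argument on the three paths $upv,uqv,urv$ and the forced containment of $bc$ plus one of $ab,ac$ is exactly the right justification, and the conclusion $pd(G)=2$ via Theorems~\ref{pd1} and~\ref{pd26} matches the paper. Your framework for part (b) is also legitimate: the reduction to $2$-connected blocks via Lemma~\ref{pdblock} is valid (blocks are induced subgraphs, so triangle-freeness, $K_{2,3}$-freeness and the independence of degree-$3$ vertices are inherited, since a vertex of degree $3$ in a block has degree $3$ in $G$), and the reformulation of a matching cut as a two-coloring in which every vertex has at most one neighbor of the opposite color is correct. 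The paper does not use either device --- it runs a direct induction on $n$ and writes the cuts as explicit edge sets --- but the substance in both cases is the same degree-based case analysis.

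The gap is that this case analysis, which is the entire content of part (b), is not carried out: the sentence ``one then checks \dots that some construction always places $x$ and $y$ on opposite sides'' is precisely the claim that needs proof, and the three candidate sets you list do not suffice as stated. For a concrete failure, take $x$ of degree $3$ with neighbors $x_1,x_2,x_3$ (all of degree $2$ by the independence hypothesis) and $y=x_1$, where the second neighbor $s_1$ of $x_1$ has degree $3$. Then $S=N[x]$ contains both $x$ and $y$; $S=\{x_1,s_1\}$ violates your coloring condition at $s_1$ (a degree-$3$ vertex with only one same-colored neighbor); and no $4$-cycle through $x_1$ need exist. The set that works is $S=\{x,x_2,x_3\}$, i.e.\ the cut $\{xy,x_2s_2,x_3s_3\}$ of the paper, and verifying it requires invoking $K_{2,3}$-freeness to choose $x_1$ so that it shares no second neighbor with $x_2,x_3$, plus a separate treatment of the subcase $s_2=s_3$. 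Similar unlisted constructions are needed when $x$ and $y$ are nonadjacent degree-$2$ vertices whose neighborhoods interact. Since these cases are exactly where the two forbidden subgraphs enter, deferring them leaves the forward implication unproved; the proposal is a correct plan, not yet a proof.
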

\begin{proof}
If $G$ contains a triangle or a $K_{2,3}$, then there exist two vertices such that no matching cut separates them.
So, $pd(G)=2$ by Theorem \ref{pd26}. Now consider that $G$ is both triangle-free
and $K_{2,3}$-free. We proceed by induction on the order $n$ of $G$.
Since $\Delta(G)=3$, we have $n\geq 4$.
If $n=4$, then the graph $G$ is $K_{1,3}$ and
$pd(G)=1$ by Theorem \ref{treepd}. The result holds for $n=4$.
Assume $pd(G)=1$ for triangle-free and $K_{2,3}$-free graphs with order $n$ satisfying the condition.
Now, consider a graph $G$ with order $n+1$. Let $x$ and $y$ be two vertices of $G$.

For $d(x)=1$, the edge set $E_x$ is an $x$-$y$ matching cut.

For $d(x)=d(y)=2$, if $x$ and $y$ are adjacent, let $x_1,y_1$ be another neighbor of $x$ and $y$, respectively.
Let $G'=G-xy$. Then by the induction hypothesis, there exist an $x_1$-$y_1$ matching cut $R$ in $G'$.
Thus, $R\cup\{xy\}$ is an $x$-$y$ matching cut in $G$. If $x$ and $y$ are nonadjacent, then assume $N(x)=\{x_1,x_2\}$.
Since $G$ contains no triangles, then $x_1$ and $x_2$ are nonadjacent.
There are two cases to consider. If $d(x_1)=2$, then let $u_1$ be another neighbor of $x_1$, and then $\{xx_2,x_1u_1\}$ is an $x$-$y$ matching cut.
If $d(x_1)=d(x_2)=3$, let $N(x_1)=\{x,u_1,u_2\}$ and $N(x_2)=\{x,v_1,v_2\}$. There are two cases to consider.
If $\{u_1,u_2\}\cap \{v_1,v_2\}\neq \emptyset$, assume $u_1=v_1$.
Let $w,q$ be another neighbor of $u_2$ and $v_2$, respectively. If $y\neq v_2$, then $\{xx_1,x_2u_1,v_2q\}$ is an $x$-$y$ matching cut.
Otherwise, $\{xx_2,x_1u_1,u_2w\}$ is an $x$-$y$ matching cut. Assume $\{u_1,u_2\}\cap \{v_1,v_2\}=\emptyset$.
Let $w,q$ be another neighbor of $u_1$ and $u_2$, respectively. If $y=u_1$, then $\{xx_2,x_1u_1,u_2q\}$ is an $x$-$y$ matching cut.
Otherwise, $\{xx_2,x_1u_2,u_1w\}$ is an $x$-$y$ matching cut.

For $d(x)=3$ (or $d(y)=3$), assume $N(x)=\{x_1,x_2,x_3\}$.
Since the set of vertices with degree $3$ in $G$ forms an independent set, the neighbors of $x$ have degree at most two.
Since $G$ is $K_{2,3}$-free, there exists at least one vertex in $N(x)$ which has only one common neighbor $x$ with the others in $N(x)$.
Without loss of generality, say $x_1$.
Let $N(x_1)=\{x,s_1\},N(x_2)=\{x,s_2\}$ and $N(x_3)=\{x,s_3\}$ ($s_2=s_3$ is possible ).
If $x$ and $y$ are nonadjacent, then $\{x_1s_1,x_2s_2,xx_3\}$ is an $x$-$y$ matching cut.
If $x$ and $y$ are adjacent, there are three cases to consider.
When $y=x_2$ (or $x_3$), we have $\{x_1s_1,xy,x_3s_3\}$ (or $\{x_1s_1,xy,x_2s_2\}$) is an $x$-$y$ matching cut.
When $y=x_1$ and $s_2=s_3$, if $d(s_2)=2$, then $\{xy\}$ is an $x$-$y$ matching cut;
if $d(s_2)=3$, then assume $N(s_2)=\{x_2,x_3,p_1\}$, and then $\{xy,s_2p_1\}$ is an $x$-$y$ matching cut.
When $y=x_1$ and $s_2\neq s_3$, we have $\{xy,x_2s_2,x_3s_3\}$ is an $x$-$y$ matching cut. Thus, $pd(G)=1$ by Theorem \ref{pd1}.
\end{proof}

\begin{corollary}
Let $G$ be a connected graph with $\Delta=3$. If the set of vertices with degree $3$ in $G$ forms an independent set, then deciding whether $pd(G)=1$ is solvable in polynomial time.
\end{corollary}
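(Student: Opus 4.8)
The plan is to reduce the decision problem to two elementary subgraph-detection tasks by invoking the characterization in the preceding theorem. Under the hypothesis of the corollary ($\Delta(G)=3$ and the set of degree-$3$ vertices independent), that theorem states that $pd(G)=1$ if and only if $G$ contains neither a triangle nor a $K_{2,3}$ subgraph. So it suffices to decide in polynomial time whether $G$ has a triangle and whether $G$ has a $K_{2,3}$ subgraph; the algorithm will accept exactly when both searches fail.

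First I would check for a triangle: for every edge $uv\in E(G)$, test whether $N(u)\cap N(v)\neq\emptyset$ (equivalently, enumerate all vertex triples). Since $\Delta(G)=3$, every neighborhood has size at most three and $|E(G)|\le\tfrac{3}{2}|V(G)|$, so this runs in time polynomial -- indeed linear -- in $|V(G)|$. Next I would check for a $K_{2,3}$ subgraph, using the observation that $G$ contains a (not necessarily induced) $K_{2,3}$ if and only if some pair of distinct vertices $x,y$ satisfies $|N(x)\cap N(y)|\ge 3$; enumerating all $\binom{|V(G)|}{2}$ pairs and computing the size of the common neighborhood for each is again polynomial. If desired, one may also first verify the promise conditions -- that $\Delta(G)=3$ and that no two degree-$3$ vertices are adjacent -- which are obviously polynomial-time checks.

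Putting these together, the algorithm outputs ``$pd(G)=1$'' precisely when $G$ has neither a triangle nor two vertices with three common neighbors, and ``$pd(G)=2$'' otherwise; correctness is immediate from the preceding theorem, and the total running time is polynomial. I do not expect any genuine obstacle here: the one point needing care is that ``$G$ contains $K_{2,3}$'' in the theorem must be read as ``$G$ has a $K_{2,3}$ subgraph,'' which is exactly the ``three common neighbors'' condition exploited above, rather than an induced-subgraph condition.
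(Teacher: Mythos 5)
Your proposal is correct and is exactly the intended argument: the paper states this corollary without proof as an immediate consequence of the preceding theorem, whose characterization (no triangle and no $K_{2,3}$ subgraph) reduces the decision to two standard polynomial-time subgraph checks. Your added observation that, with $\Delta(G)=3$, a $K_{2,3}$ subgraph amounts to two vertices sharing three common neighbors is a correct and harmless refinement.
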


Naturally, we can ask the following question.

\begin{Question}
Let $G$ be a connected graph with $\Delta=3$. Is it true that deciding whether $pd(G)=1$ is solvable in polynomial time ?
\end{Question}

\subsection{Hardness results for bipartite graphs}

\noindent In fact, by Corollary \ref{cor}, we know that it is $NP$-complete to decide whether $pd(G)=1$ for a general graph $G$.
In this subsection, we will further show that given a bipartite graph $G$, deciding whether $pd(G)=1$ is $NP$-complete.

Let $G$ be a simple connected graph. We employ the idea used in \cite{M} to construct a new graph $G^*$, which is constructed
as follows: $G^*$ is obtained from $G$ by replacing each edge by a $4$-cycle. Then $G^*$ has two types of vertices:
\emph{old} vertices, which are vertices of $G$, and \emph{new} vertices, which are not vertices of $G$.
For example, for an edge $e=uv \in E(G)$, replace it by a 4-cycle $C_{e}=uxvyu$.
Then $u,v$ are old vertices and $x,y$ are new vertices. Observe that all new vertices of $G^*$
have degree two, and each edge of $G^*$ connects an old vertex to a new vertex.
Clearly, $G^*$ is a bipartite graph with one side of the bipartition consisting only of vertices of degree $2$.

\begin{lemma}\label{bipar}
Let $G$ be a simple connected graph. Then $pd(G)=1$ if and only if $pd(G^*)=1$.
\end{lemma}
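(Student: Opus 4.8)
The plan is to reduce everything to matching cuts via Theorem~\ref{pd1}: since $pd(H)=1$ exactly when every pair of vertices of $H$ is separated by a matching cut, it suffices to show that every pair of vertices of $G$ is separated by a matching cut if and only if the same holds for $G^*$. I will repeatedly use that a minimal (by inclusion) edge-cut separating two vertices $s,t$ has the form $\partial(A,B)$, where the deleted graph splits as $A\sqcup B$ with $s\in A$ and $t\in B$, and that a minimal sub-edge-cut of a matching cut is again a matching cut (being contained in a matching).

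For the forward direction, suppose every pair of vertices of $G$ is separated by a matching cut. The main device is a \emph{lift}: given a minimal matching cut $M$ of $G$ with $G-M=A\sqcup B$ and $M=\partial_G(A,B)$, for each crossing edge $e=ab\in M$ (with $a\in A$, $b\in B$) delete, from the $4$-cycle $C_e$ that replaces $e$ in $G^*$, the edge joining $a$ to one new vertex of $C_e$ and the edge joining $b$ to the other new vertex; call the result $M^*$. Then $M^*$ is a matching, since distinct edges of $M$ are vertex-disjoint and distinct replacement cycles share no new vertex; and $G^*-M^*$ splits into $A^*\sqcup B^*$, where in an intact cycle $C_f$ ($f\notin M$) both new vertices inherit the common side of their two old neighbours, while in a crossing cycle $C_e$ one new vertex stays with $a$ and the other with $b$. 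Now, given an arbitrary pair of vertices of $G^*$, choose old ``representatives'' $p\ne q$ as follows: for two old vertices take $p$ and $q$ to be those vertices; for an old vertex $u$ and a new vertex $z\in C_e$, $e=ab$, take $p=u$ and $q$ to be a vertex of $\{a,b\}$ other than $u$; for two new vertices $z_1\in C_e$ ($e=ab$) and $z_2\in C_f$ ($f=cd$), take $p$ and $q$ to be the endpoints of $e$ if $e=f$, and otherwise $p\in\{a,b\}$, $q\in\{c,d\}$ with $p\ne q$ (possible because $\{a,b\}\ne\{c,d\}$). Lifting a minimal matching cut of $G$ separating $p$ and $q$, and exploiting the freedom in each crossing cycle $C_e$ of which new vertex stays with which endpoint (together with the fact that in an intact $C_e$ both new vertices automatically sit on the side of $p$ or of $q$), one can force the new vertex (or vertices) occurring in the given pair onto the sides that keep the pair separated, yielding a matching cut of $G^*$.

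For the converse, suppose every pair of vertices of $G^*$ is separated by a matching cut, and let $u,v\in V(G)$. Choose a minimal matching cut $M^*$ of $G^*$ separating them, so $G^*-M^*=A^*\sqcup B^*$ with $M^*=\partial(A^*,B^*)$, $u\in A^*$, $v\in B^*$. Put $A=A^*\cap V(G)$, $B=B^*\cap V(G)$ and $M=\partial_G(A,B)$; then $M$ is an edge-cut of $G$ separating $u$ and $v$, and it remains to verify that $M$ is a matching. For a crossing edge $e=ab\in M$ with $a\in A^*$ and $b\in B^*$, the two new vertices of $C_e$ cannot both lie in $A^*$ (two edges of $M^*$ would then meet at $b$) nor both in $B^*$ (two edges of $M^*$ would then meet at $a$); hence exactly one lies in each side, so $M^*$ contains exactly two edges of $C_e$, one incident with $a$ and one incident with $b$. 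Consequently, if some vertex of $G$ were incident with two edges $e$ and $e'$ of $M$, then $M^*$ would contain an edge of $C_e$ and an edge of $C_{e'}$ both incident with that vertex, contradicting that $M^*$ is a matching. Thus $M$ is a matching cut of $G$ separating $u$ and $v$, and by Theorem~\ref{pd1} we conclude $pd(G)=1$.

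I expect the main obstacle to be the bookkeeping in the forward direction: for pairs involving new vertices one has to juggle the choice of old representatives together with, in each crossed $4$-cycle, which new vertex is kept with which endpoint, so that the relevant new vertices land on the correct sides while $M^*$ stays a matching. This goes through precisely because $M$ is a matching, so distinct crossing edges and their replacement $4$-cycles are pairwise vertex-disjoint. The converse is comparatively short, its crux being the observation that a matching cut of $G^*$ meets every crossed $4$-cycle in exactly two edges, one at each old endpoint.
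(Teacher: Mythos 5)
Your proposal is correct and follows essentially the same route as the paper: both directions reduce to matching cuts via Theorem~\ref{pd1}, with a matching cut of $G$ lifted to $G^*$ by replacing each crossing edge $ab$ with two opposite edges of the $4$-cycle $C_{ab}$ (choosing per cycle which new vertex stays with which endpoint, exactly the paper's $(F\setminus\{ab\})^*\cup\{\cdot,\cdot\}$ constructions), and a matching cut of $G^*$ projected back by the observation that it meets each crossed $4$-cycle in exactly two independent edges. Your converse, phrased via the partition $A^*\sqcup B^*$ of a minimal cut, is if anything a cleaner packaging of the paper's argument.
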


\begin{proof}
Suppose $pd(G^*)=1$. For any two vertices $x,y$ of $G$, $x,y$ are old vertices in $G^*$.
By Theorem \ref{pd1}, there exists an $x$-$y$ matching cut $F$ in $G^*$.
Then $F$ consists of pairs of matching edges in the same $4$-cycle.
Let $F'$ be the edge set obtained by replacing each pair of matching edges of $F$ in the same $4$-cycle
by the edge to which the $4$-cycle corresponds in $G$. Then $F'$ is an $x$-$y$ matching cut in $G$.

Suppose $pd(G)=1$. Then for any two vertices $u,v$ of $G$, there is a $u$-$v$ matching cut.
We denote it by $F_{uv}$. Let $F$ be an edge subset of $E(G)$. Choose two matching edges from each $4$-cycle
to which each edge of $F$ corresponds in $G^*$. Denote the edge set by $F^*$.

For any two vertices $x,y$ in $G^*$, if $x,y$ are old vertices, then $F^*_{xy}$ is an $x$-$y$ matching cut in $G^*$.
If $x$ is an old vertex and $y$ is a new vertex, there are two cases to consider.
If $x,y$ are in a same 4-cycle, assume that the 4-cycle is $xyzwx$. We know $xz\in F_{xz}$ in $G$.
Then $(F_{xz}\setminus \{xz\})^*\cup \{xy,zw\}$ is an $x$-$y$ matching cut in $G^*$.
If $x,y$ are in different 4-cycles, say $C_1=xv_1uv_2x$ and $C_2=yz_1wz_2y$. If there are no $x$-$z_2$ paths in $G-F_{xz_1}$,
then $F^*_{xz_{1}}$ is an $x$-$y$ matching cut in $G^*$.
Otherwise, we know $z_1z_2\in F_{xz_1}$.
Then $(F_{xz_1}\setminus \{z_1z_2\})^*\cup \{yz_2,wz_1\}$ is an $x$-$y$ matching cut in $G^*$.
If $x,y$ are new vertices, there are two cases to consider.
If $x,y$ are in a same 4-cycle, assume that the 4-cycle is $\{uxvyu\}$. Then $F^*_{uv}$ is an $x$-$y$ matching cut in $G^*$.
If $x,y$ are in different 4-cycles, say $C_1=u_1xu_2vu_1$ and $C_2=z_1yz_2wz_1$.
Denote the component containing $u_1$ by $C$ and the remaining part by $\bar{C}$ in $G-F_{u_1u_2}$.
If $\{z_1z_2\}\subseteq \bar{C}$, then $(F_{u_1u_2}\setminus \{u_1u_2\})^*\cup \{u_1v,u_2x\}$ is an $x$-$y$ matching cut in $G^*$.
If $z_1 \in C$ and $z_2\in \bar{C}$, then $z_1z_2\in F_{u_1u_2}$.
Then, $(F_{u_1u_2}\setminus \{u_1u_2,z_1z_2\})^*\cup \{u_1v,u_2x,z_1y,z_2w\}$ is an $x$-$y$ matching cut in $G^*$.
If $\{z_1z_2\}\subseteq C$, then $(F_{u_1u_2}\setminus \{u_1u_2\})^*\cup \{u_1x,u_2v\}$ is an $x$-$y$ matching cut in $G^*$.
\end{proof}

From the above Lemma \ref{bipar}, we can immediately get the following result.

\begin{theorem}
Given a bipartite graph $G$, deciding whether $pd(G)=1$ is $NP$-complete.
\end{theorem}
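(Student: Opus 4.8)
The plan is to give a polynomial-time many-one reduction from the (already established) $NP$-complete problem of deciding, for an arbitrary connected graph $G$, whether $pd(G)=1$, to the same decision problem restricted to bipartite graphs. The reduction map is precisely $G\mapsto G^*$, the graph obtained from $G$ by replacing every edge with a $4$-cycle, so in effect the theorem is a corollary of Lemma~\ref{bipar} together with the complexity statement recorded at the start of this subsection (which is itself the $k=1$ case of Corollary~\ref{cor}, since a $1$-edge-colored graph is proper disconnected exactly when every pair of vertices is separated by a matching cut, i.e.\ exactly when $pd(G)=1$ by Theorem~\ref{pd1}).

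First I would verify membership in $NP$. For a bipartite graph $G$, a witness that $pd(G)=1$ is a family $(F_{xy})$, indexed by the unordered pairs of distinct vertices, with each $F_{xy}\subseteq E(G)$; this has polynomial size, and in polynomial time one checks, for every pair $\{x,y\}$, that $F_{xy}$ is a matching and that removing it places $x$ and $y$ in different components. By Theorem~\ref{pd1} these checks certify $pd(G)=1$, so the problem lies in $NP$ (for bipartite inputs, and in fact for all inputs).

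Next I would set up the reduction itself. Starting from a simple connected graph $G$, the graph $G^*$ is built in time $O(|E(G)|)$, has $|V(G)|+2|E(G)|$ vertices, is again simple and connected, and is bipartite with the two colour classes being the old vertices and the new vertices --- every edge of $G^*$ joins an old vertex to a new one, as noted in the construction. Lemma~\ref{bipar} then yields $pd(G)=1$ if and only if $pd(G^*)=1$, which is exactly the correctness of the reduction; combined with the $NP$-hardness of the general ``$pd=1$'' problem and the membership argument above, this gives $NP$-completeness on bipartite graphs.

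I do not expect a real obstacle here: all the combinatorial work --- matching up matching cuts in $G$ with matching cuts in $G^*$ across the three vertex-type cases --- has already been carried out in the proof of Lemma~\ref{bipar}. The only points that still need (routine) attention are confirming that the edge-to-$4$-cycle operation indeed produces a connected bipartite simple graph with the stated bipartition, and making explicit how the general ``$pd(G)=1$ is $NP$-complete'' claim follows (via Corollary~\ref{cor} at $k=1$, hence ultimately from NAE-3-SAT), so that the map $G\mapsto G^*$ can be composed with that hardness reduction.
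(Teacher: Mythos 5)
Your proposal is correct and follows essentially the same route as the paper, which likewise obtains the theorem immediately from Lemma~\ref{bipar} combined with the $NP$-hardness of deciding $pd(G)=1$ for general graphs (the $k=1$ case of Corollary~\ref{cor} via Theorem~\ref{pd1}). The extra details you supply (membership in $NP$, polynomial size of $G^*$, and its bipartiteness) are routine verifications that the paper leaves implicit.
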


\section{Hardness results for rainbow vertex-disconnection number}

\noindent In this section, we show that it is $NP$-complete to decide
whether a given vertex-colored graph $G$ is rainbow vertex-disconnected, even though the graph $G$ has maximum degree $\Delta(G)=3$ or is bipartite.

\begin{lemma}\label{RVD}
Let $G$ be a $k$-vertex-colored graph where $k$ is a fixed positive integer. Deciding whether $G$ is rainbow vertex-disconnected under this coloring is in $P$.
\end{lemma}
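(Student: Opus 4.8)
The plan is to exhibit a brute-force algorithm that runs in polynomial time once $k$ is fixed. The crucial observation is that any rainbow vertex subset of $G$ has at most $k$ vertices, because its vertices receive pairwise distinct colors and only $k$ colors are available. Consequently an $x$-$y$ rainbow vertex-cut — whether or not $x$ and $y$ are adjacent — involves at most $k$ vertices, and there are only $O(n^{k})$ vertex subsets of $G$ of size at most $k$, where $n=|V(G)|$. For a fixed integer $k$ this quantity is polynomial in $n$, so the set of all candidate rainbow cuts can be listed efficiently.

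Next I would spell out the test for a single ordered or unordered pair $x,y$ of distinct vertices. If $x$ and $y$ are nonadjacent, I would enumerate every $S\subseteq V(G)\setminus\{x,y\}$ with $|S|\le k$ that is rainbow (checking rainbow-ness of a given $S$ takes $O(k)$ time), and for each such $S$ compute the connected components of $G-S$ in $O(|V(G)|+|E(G)|)$ time and test whether $x$ and $y$ lie in different components. If $x$ and $y$ are adjacent, I would instead enumerate every $S\subseteq V(G)\setminus\{x,y\}$ with $|S|\le k-1$ for which $S\cup\{x\}$ is rainbow or $S\cup\{y\}$ is rainbow, and for each such $S$ test whether $x$ and $y$ lie in different components of $(G-xy)-S$. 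By the definition of an $x$-$y$ rainbow vertex-cut together with the size bound of the previous paragraph, the pair $x,y$ admits a rainbow vertex-cut if and only if at least one of the enumerated candidate sets passes its test, so this subroutine decides the pair correctly.

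Finally I would run this subroutine over all $\binom{n}{2}$ pairs of distinct vertices of $G$ and output ``yes'' precisely when every pair admits a rainbow vertex-cut. The total running time is $O\big(n^{2}\cdot n^{k}\cdot(n+m)\big)$ with $m=|E(G)|$, which is polynomial in the size of $G$ for each fixed $k$; hence the decision problem lies in $P$. There is no genuine obstacle in this argument: the only points that require care are the uniform bound $k$ on the size of a rainbow cut (which is what makes the enumeration polynomial rather than exponential), and the bookkeeping that distinguishes the adjacent case — where one requires $S\cup\{x\}$ or $S\cup\{y\}$ to be rainbow instead of $S$ itself, and separates $x,y$ in $(G-xy)-S$ — from the nonadjacent case.
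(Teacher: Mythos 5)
Your proposal is correct and follows essentially the same brute-force enumeration as the paper: bound the size of any rainbow vertex-cut by $k$, enumerate the $O(n^k)$ candidate subsets for each of the $\binom{n}{2}$ pairs, and check separation in $G-S$ or $(G-xy)-S$. Your treatment of the adjacent case (requiring $S\cup\{x\}$ or $S\cup\{y\}$ to be rainbow, hence $|S|\le k-1$) is slightly more explicit than the paper's, but the argument is the same.
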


\begin{proof}
Let $x$ and $y$ be any two vertices of $G$. Since $G$ is a vertex-colored graph, any rainbow vertex-cut $S$ have no more than $k$ vertices.
There are at most $n-2 \choose k$ choices for $S$, which is a polynomial of $n$ for a fixed $k$. For any two nonadjacent (or adjacent) vertices $x$, $y$ of $G$, it is polynomial time to
check whether $x$ and $y$ are in different components of $G-S$ (or $(G-xy)-S$). There are at most $n \choose 2$ pairs of vertices in $G$. Thus,
it is polynomial time to deciding whether $G$ is rainbow vertex-disconnected.

\end{proof}

\begin{theorem}\label{hard}
Let $G$ be a vertex-colored graph and $s$ and $t$ be two vertices of $G$. Deciding whether there is a rainbow vertex-cut between $s$ and $t$ is NP-complete.
\end{theorem}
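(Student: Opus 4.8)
The plan is to prove the two halves separately: membership in $NP$, and $NP$-hardness by a reduction from the rainbow (edge-)cut problem between two specified vertices, which is $NP$-complete by \cite{BCL}. Membership is immediate: a rainbow vertex-cut $S$ uses pairwise distinct colors, so $|S|\le n$, and $S$ is therefore a polynomial-size certificate; given $S$ one checks in polynomial time that $S$ is rainbow and that $s,t$ lie in different components of $G-S$ (or of $(G-st)-S$ if $s,t$ are adjacent). Hence the problem is in $NP$, and the rest of the argument is devoted to hardness.

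For the reduction, starting from an edge-colored connected graph $G$ with edge-coloring $c$ and two vertices $s,t$, I would build a vertex-colored graph $G'$ as follows. First subdivide every edge $e$ of $G$ by a new vertex $m_e$, and color $m_e$ by $c(e)$. Next replace every vertex $v\in V(G)\setminus\{s,t\}$ by a clique $K_v$ on $N:=|V(G)|+1$ vertices, all receiving one common fresh color $\gamma_v$; for each edge $e=uv$ of $G$ join $m_e$ to $\lceil N/2\rceil$ distinct vertices of $K_u$ and to $\lceil N/2\rceil$ distinct vertices of $K_v$ (when an endpoint lies in $\{s,t\}$, join $m_e$ directly to that vertex). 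Finally give $s$ and $t$ two more fresh colors. Then $G'$ is a simple graph, $s$ and $t$ are nonadjacent in $G'$, the number of vertices is $O(|V(G)|^2+|E(G)|)$, and the construction is polynomial-time. The claim to prove is that $G$ has a rainbow $s$-$t$ edge-cut if and only if $G'$ has a rainbow $s$-$t$ vertex-cut.

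The forward direction is easy: if $R$ is a rainbow edge-cut of $G$ separating $s$ and $t$, then $S:=\{m_e : e\in R\}$ is rainbow (since $c(m_e)=c(e)$), and any $s$-$t$ walk in $G'-S$ would traverse the clique-regions in an order that projects to an $s$-$t$ walk in $G$ using only edges outside $R$, contradicting that $R$ is a cut. For the converse, let $S$ be a rainbow $s$-$t$ vertex-cut of $G'$. Being rainbow, $S$ meets each color class in at most one vertex; in particular $|S\cap K_v|\le 1$, so $K_v-S$ is still a clique on at least $N-1\ge 2$ vertices and hence connected, and every $m_e$ still has at least $\lceil N/2\rceil-1\ge 1$ unremoved attachment vertices in each incident clique. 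Therefore, setting $R:=\{e : m_e\in S\}$, I claim $R$ is an $s$-$t$ edge-cut of $G$: if $s,v_1,\dots,v_r,t$ were an $s$-$t$ path in $G-R$, then none of $m_{sv_1},m_{v_1v_2},\dots,m_{v_rt}$ lies in $S$, and one lifts this to an $s$-$t$ path in $G'-S$ by walking $s\to m_{sv_1}\to(\text{surviving vertex of }K_{v_1})\to\cdots$, using connectivity of each $K_{v_i}-S$ to move between the attachment vertices of consecutive subdivision vertices, and finally $\to m_{v_rt}\to t$; this contradicts that $S$ is a cut. Moreover $R$ is rainbow, again because $c(m_e)=c(e)$. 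Together with $NP$ membership this proves the theorem.

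The main obstacle is exactly the step that motivates the clique blow-up: ruling out "cheating" rainbow vertex-cuts of $G'$. If one used the naive subdivision — keeping every original vertex of $G$ with its own fresh color — then a single original vertex of $G$ might already separate $s$ from $t$, and deleting it would be a (vacuously rainbow) vertex-cut whose image in $G$ is the whole star of edges at that vertex, which need not be rainbow, so the two problems would not match. Replacing each original vertex by a monochromatic clique and attaching each subdivision vertex to many clique-vertices forces any attempt to cut "through a vertex region" to delete at least two equally colored vertices, which a rainbow set cannot afford; this is precisely the property making both the projection $S\mapsto R$ and the lifting $R\mapsto S$ valid. Once this is set up, the two routing arguments are routine, so the design of $G'$ is where the real work lies.
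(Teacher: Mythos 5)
Your proof is correct, but it takes a genuinely different route from the paper. The paper establishes hardness by a direct reduction from $3$-SAT: it builds a gadget graph $G_\phi$ with variable vertices $x_j,\bar{x}_j$ sharing a color (so a rainbow cut encodes a consistent truth assignment) and clause gadgets whose internal vertices $u_{i,k},v_{i,k},w_{i,k}$ are colored so that a rainbow cut can block at most two of the three $s$--$c_i$ paths through ``false'' literals, forcing each clause to contain a true literal. You instead reduce from the rainbow $s$-$t$ \emph{edge}-cut problem, whose $NP$-completeness the paper itself attributes to \cite{BCL}, via a subdivision-plus-monochromatic-clique blow-up that transfers edge colors to subdivision vertices and makes the original vertices ``uncuttable'' by a rainbow set (since $|S\cap K_v|\le 1$ keeps $K_v-S$ connected and every subdivision vertex attached). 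Your two directions check out: the forward direction because every $s$-$t$ walk in $G'-S$ projects to an $s$-$t$ walk in $G$ avoiding $R$, and the converse because the surviving attachment vertices let you lift any $s$-$t$ path of $G-R$ into $G'-S$; the rainbow conditions match exactly because $c(m_e)=c(e)$ and all other colors are fresh. Your $NP$-membership argument (a rainbow cut as a polynomial certificate) is also cleaner than the paper's appeal to its Lemma on fixed $k$. The trade-offs: your reduction is shorter and conceptually modular, but it outsources the combinatorial core to the external edge-cut hardness result, whereas the paper's self-contained $3$-SAT gadget is reused almost verbatim to obtain the subsequent $\Delta=3$ and bipartite strengthenings by local surgery on $G_\phi$; your construction would need separate degree-reduction and bipartiteness arguments to recover those corollaries.
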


\begin{proof}
This problem is NP from Lemma \ref{RVD}. We now show that the problem is NP-complete by giving a polynomial reduction from the $3$-SAT problem to this problem.
Given a 3CNF formula $\phi=\wedge_{i=1}^{m} c_{i}$ over $n$ variables $x_{1},x_{2},\cdots,x_{n}$, we construct a graph $G_{\phi}$ with two special vertices $s,t$
and a vertex-coloring $f$ such that there is a rainbow vertex-cut between $s,t$ in $G_{\phi}$ if and only if $\phi$ is satisfied. Let $\theta_{c_{i}}(x_j)$ denote
the location of literal $x_j$ in clause $c_i$ for $i\in[m]$ and $j\in[n]$.

We define $G_{\phi}$ as follows:
\begin{align*}
V(G_{\phi})& =\{c_i,u_{i,k},v_{i,k},w_{i,k}: i\in[m],k\in[3] \}\cup \{x_j,\bar{x}_j: j\in[n] \} \cup \{s,t\}. \\
E(G_{\phi})& =\{x_j u_{i,k},\bar{x}_j w_{i,k}:\mbox{If $x_{j}\in c_i$ and $\theta_{c_{i}}(x_j)=k$}, i\in [m],j\in[n],k\in\{1,2,3\}\} \\
& \cup \{x_j w_{i,k},\bar{x}_j u_{i,k}:\mbox{If $\bar{x}_{j}\in c_i$ and $\theta_{c_{i}}(\bar{x}_j)=k$},i\in [m],j\in[n],k\in\{1,2,3\}\} \\
& \cup \{u_{i,k} v_{i,k}: i\in [n], k\in \{1,2,3\}\}   \cup
\{sx_j,s\bar{x}_j:j\in[m]\}   \\
&  \cup \{c_iv_{i,k},c_iw_{i,k}: i\in [n], k\in \{1,2,3\}\}   \cup \{tc_i: i\in [n]\}  \\
&  \cup \{st\}.
\end{align*}

Now we define a vertex-coloring $f$ of $G_\phi$ as follows. For $i\in[m],j\in[n]$ and $k\in [3]$, let $f(x_j)=f(\bar{x}_j)=r_{j}$,
$f(w_{i,k})=r_{i,k}$, $f(u_{i,k})=r_{i,4}$, $f(v_{i,k})=r_{i,5}$, $f(s)=f(t)=f(c_i)=r$. All those colors are distinct.

\begin{figure}[h]
    \centering
    \includegraphics[width=265pt]{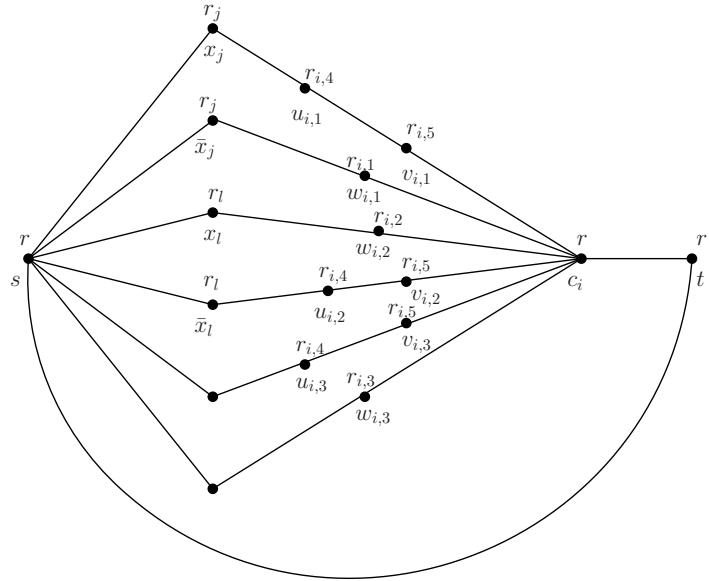}\\
    \caption{The variables $x_j,\bar{x_l}\in c_i$ and $x_j,\bar{x_l}$ are the first and second literature respectively.} \label{1}
\end{figure}

We claim that there is a rainbow vertex-cut between $s$ and $t$ in $G_{\phi}$ if and only if $\phi$ is satisfied.

Suppose that there is a rainbow $s$-$t$ vertex-cut $S$ in $G_\phi$. Since $s$ and $t$ are adjacent in $G_\phi$,
$S+s$ or $S+t$ is rainbow and so $c_i \notin S$ for $i\in [n]$.
Thus $S$ also separates $s$ and $c_i$. Note that there are three $s$-$c_{i}$ paths of length $4$. Since $f(u_{i,k})=r_{i,4}$ and $f(v_{i,k})=r_{i,5}$ for $k\in [3]$,
there exists at least one $j\ (j\in [n])$ such that $x_{j} \in S$ or $\bar{x}_{j} \in S$. Since $f(x_j)=f(\bar{x}_j)=r_{j}$, $x_j$ and $\bar{x}_{j}$ can not belong to $S$ simultaneously.
If $x_{j} \in S$, set $x_j=1$. If $\bar{x}_{j} \in S$, set $x_j=0$. Then the literature associated with $x_j$ in clause $c_{i}$ is satisfied and $c_i$ is true. Since $S$ is a rainbow $s$-$t$ vertex-cut, there are no conflicts on the truth assignments of the variables. Therefore, $\phi$ is satisfied.

Suppose that $\phi$ is satisfied. We now try to find an $s$-$t$ rainbow vertex-cut $S$ in $G_{\phi}$ under the coloring $f$. Since $f(s)=f(t)=f(c_i)=r$ and  $s$, $t$ are adjacent, then $c_i \notin S$.  For any variable $x_j(j\in [n])$, if $x_j=0$, let the vertex $\bar{x}_j \in S$. In this case, if $x_j\in c_i$, then $x_j$ is adjacent to $u_{i,k}$ in $G_\phi$ and let one vertex of $\{u_{i,k},v_{i,k}\}$ belong to $S$ for $i\in [m],j\in[n],k\in\{1,2,3\}$. If $\bar{x}_j\in c_i$, then $x_j$ is adjacent to $w_{i,k}$ in $G_\phi$ and let vertex $\{w_{i,k}\}\in S$ for $i\in [m],j\in[n],k\in\{1,2,3\}$. For any variable $x_j\ (j\in [n])$, if $x_j=1$, let the vertex $x_j \in S$. In this case, if $x_j\in c_i$, then let vertex $\{w_{i,k}\}\in S$ for $i\in [m],j\in[n],k\in\{1,2,3\}$. If $\bar{x}_j\in c_i$, then let one vertex of $\{u_{i,k},v_{i,k}\}$ belong to $S$ for $i\in [m],j\in[n],k\in\{1,2,3\}$. By the choice of $S$, we know that if a literal of $c_i$ is false, then a vertex-colored with $r_{i,4}$ or $r_{i,5}$ is in $S$. So if two literals of some clause $c_i$ are false, we put two vertices colored with $r_{i,4}$ and $r_{i,5}$ respectively to $S$. Since each clause $c_i$ is satisfied, the vertex set $S$ is rainbow. Thus $S$ is an $s$-$t$ rainbow vertex-cut.

\end{proof}

\begin{theorem}
Let $G$ be a vertex-colored graph with maximum degree $\Delta=3$ and $s$ and $t$ be two vertices of $G$. Then deciding whether there is a rainbow vertex-cut between $s$ and $t$ is $NP$-complete.
\end{theorem}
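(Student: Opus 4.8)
The plan is to reduce from $3$-SAT by taking the graph $G_\phi$ from the proof of Theorem~\ref{hard} and replacing, by local gadgets, each of its vertices of degree greater than $3$, in such a way that the answer ``there is a rainbow $s$-$t$ vertex-cut'' is unchanged. First, the problem is in $NP$: a rainbow $s$-$t$ vertex-cut uses at most one vertex of each colour, hence has at most $|V(G)|$ vertices, and one checks in polynomial time that a given set is rainbow and is an $s$-$t$ vertex-cut.

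In $G_\phi$ the vertices of degree larger than $3$ are $s$, $t$, the clause vertices $c_i$, and the literal vertices $x_j,\bar x_j$, while every $u_{i,k},v_{i,k},w_{i,k}$ already has degree $2$; recall that $s,t,c_1,\dots,c_m$ all receive the common colour $r$, and that $x_j,\bar x_j$ share the colour $r_j$. For each of $s,t,c_i$ I would substitute a tree of maximum degree $3$, say $T_s,T_t,T_{c_i}$, having one leaf per original neighbour, and colour every vertex of these trees with $r$; I keep a leaf of $T_s$ as the terminal $s$, a leaf of $T_t$ as the terminal $t$, and join these two leaves by the edge that reproduces $st$. For each literal vertex $x_j$, say with $d_j$ neighbours other than $s$, I would use a \emph{bottleneck gadget}: a single vertex $\hat x_j$ of colour $r_j$ adjacent to one leaf of $T_s$ and to the root of a tree $B_j$ of maximum degree $3$ with $d_j$ leaves, each leaf of $B_j$ joined to one of the $u$- and $w$-vertices that $x_j$ was adjacent to, and I colour \emph{every} vertex of $B_j$ with $r$; the vertex $\bar x_j$ is handled symmetrically by a gadget $(\hat{\bar x}_j,\bar B_j)$. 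The vertices $u_{i,k},v_{i,k},w_{i,k}$ and all their colours are left unchanged. Write $\widehat G_\phi$ for the resulting graph; inspecting the gadgets one vertex at a time gives $\Delta(\widehat G_\phi)=3$, and $\widehat G_\phi$ is built from $\phi$ in polynomial time.

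The heart of the argument is that any rainbow $s$-$t$ vertex-cut $S$ of $\widehat G_\phi$ contains no $r$-coloured vertex: as $s$ and $t$ are adjacent and both coloured $r$, the set $S+s$ or $S+t$ is rainbow, which rules out every vertex of colour $r$ in $S$. Hence $S$ avoids all of $T_s,T_t,T_{c_i},B_j,\bar B_j$ and lies in $\{\hat x_j,\hat{\bar x}_j:j\in[n]\}\cup\{u_{i,k},v_{i,k},w_{i,k}:i\in[m],k\in[3]\}$. Since each $r$-coloured tree is wholly present in $\widehat G_\phi-S$ and so acts as an uncuttable block, and since $\hat x_j$ (respectively $\hat{\bar x}_j$) is the only vertex through which $B_j$ (respectively $\bar B_j$) can reach $T_s$, the map $\hat x_j\leftrightarrow x_j$, $\hat{\bar x}_j\leftrightarrow\bar x_j$, identity on the $u,v,w$ vertices, is a bijection between the rainbow $s$-$t$ vertex-cuts of $\widehat G_\phi$ and those of $G_\phi$ which preserves both being an $s$-$t$ vertex-cut and being rainbow. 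By Theorem~\ref{hard} it follows that $\widehat G_\phi$ has a rainbow $s$-$t$ vertex-cut if and only if $\phi$ is satisfiable; to write it out one would copy the two implications from the proof of Theorem~\ref{hard} with $\hat x_j$ in the role of $x_j$ (put $\hat x_j\in S$ when $x_j=1$ and $\hat{\bar x}_j\in S$ when $x_j=0$, adding the suitable $w_{i,k}$ or one of $u_{i,k},v_{i,k}$), the colours $r_{i,4}$ (common to $u_{i,1},u_{i,2},u_{i,3}$) and $r_{i,5}$ (common to $v_{i,1},v_{i,2},v_{i,3}$) forcing each clause to contain a true literal.

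The only delicate point, and the expected main obstacle, is the degree reduction at the literal vertices. One cannot simply split $x_j$ into several colour-$r_j$ copies: a rainbow cut could then use just some of the copies, disconnecting only part of the $u$- and $w$-vertices from $s$, and thereby simulate a partial or inconsistent truth assignment for free. Colouring the whole auxiliary tree $B_j$ with the globally forbidden colour $r$ is precisely what prevents this, since it leaves $\hat x_j$ as the unique vertex in the ``reach'' of $B_j$ that a rainbow cut may contain, so the gadget is cut-equivalent to the single vertex $x_j$. The reductions at $s$, $t$ and the $c_i$ are routine, as those vertices are already $r$-coloured and any tree made of $r$-coloured vertices is automatically immune to rainbow cuts.
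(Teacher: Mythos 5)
Your proof is correct, and in fact it is written out in more detail than the paper's own argument. Both proofs follow the same high-level strategy: take the graph $G_{\phi}$ from the proof of Theorem~\ref{hard} and locally replace each vertex of degree greater than $3$ by a gadget of maximum degree $3$ so that the existence of a rainbow $s$-$t$ vertex-cut is preserved. The gadgets differ, though. The paper replaces each such vertex $v$ by a cycle $C_v$ on $d(v)$ new vertices, all given the colour of $v$, with each cycle vertex attached to one former neighbour of $v$; the correctness there rests on two facts: a cycle minus a single vertex remains connected, so the only way a rainbow cut can interact with $C_{x_j}\cup C_{\bar x_j}$ (all coloured $r_j$) is through at most one attachment point, and any such choice still decodes to a consistent truth assignment. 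Your gadget instead uses monochromatic colour-$r$ trees for $s$, $t$ and the $c_i$, and for each literal a single bottleneck vertex $\hat x_j$ of colour $r_j$ feeding an $r$-coloured fan-out tree $B_j$; correctness rests on the observation that no colour-$r$ vertex can lie in a rainbow $s$-$t$ cut because $s$ and $t$ are adjacent and both coloured $r$. Your version buys an exact bijection between the rainbow $s$-$t$ cuts of the new graph and those of $G_{\phi}$ (one copy of each colour $r_j$ survives, so no ``partial'' use of a literal gadget is possible), which makes the equivalence cleaner to state; the paper's version is more uniform (one gadget for every high-degree vertex, no case distinction between $r$-coloured and $r_j$-coloured vertices) but admits degenerate cuts that hit a non-$s$-side attachment point of a literal cycle and must be argued to still decode to satisfying assignments. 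One small point worth making explicit in your write-up: a path can enter a fan-out tree $B_j$ ``transversally'' through a $u$- or $w$-vertex even when $\hat x_j$ is in the cut, so $B_j$ creates connections that the single vertex $x_j$ would have severed; this is harmless only because any such entry point is reachable solely via some $T_{c_i}$, which is already adjacent to $t$, so no new $s$-$t$ path arises. You implicitly rely on this when asserting cut-equivalence, and it deserves a sentence.
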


\begin{proof}
Based on the vertex-colored graph $G_\phi$ in Theorem \ref{hard}, we can obtain a new graph $G_\phi^{*}$ by doing the following operation on $G_\phi$. We change each vertex $v$ with degree more than $3$ to a cycle $C_v$ with $d(v)$ new vertices. The new vertices in the cycle will connect the neighbors of $v$, respectively. We color all the new vertices of $C_v$ using the same color with $v$. If a new vertex $v_1$ from $C_s$ connects the new vertex $v_2$ from $C_t$, we regard $v_1$ as a new vertex $s$ and $v_2$ as a new vertex $t$. Similarly to the proof of Theorem \ref{hard}, we can prove that deciding whether there is a rainbow vertex-cut between $s$ and $t$ in graph $G_\phi^{*}$ is $NP$-complete.
\end{proof}

\begin{theorem}
Let $G$ be a vertex-colored bipartite graph and $s$ and $t$ be two vertices of $G$. Deciding whether there is a rainbow vertex-cut between $s$ and $t$ is NP-complete.
\end{theorem}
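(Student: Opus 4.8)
The plan is to reduce from the general case already established in Theorem~\ref{hard}, by applying a subdivision trick that turns an arbitrary vertex-colored graph into a bipartite one while preserving the existence of a rainbow $s$-$t$ vertex-cut. Given an instance $(G,f,s,t)$ of the problem from Theorem~\ref{hard}, I would construct $G'$ by subdividing every edge of $G$ exactly once, so each edge $uv$ becomes a path $u\,-\,m_{uv}\,-\,v$ with a new midpoint vertex $m_{uv}$. The graph $G'$ is bipartite, with one side being the original vertices of $G$ and the other side being the midpoints. The coloring $f'$ keeps $f'(w)=f(w)$ for original vertices $w$, and assigns to every midpoint a brand-new color, with all midpoint colors mutually distinct and distinct from the colors used on $V(G)$. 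Since $k$ was fixed in Theorem~\ref{hard} but the number of edges is polynomial, the total number of colors of $G'$ is still polynomial in the input size (and in fact one could even reuse a single fresh color on all midpoints if one is slightly more careful, but distinctness makes the argument cleanest).

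The key correspondence to prove is: $G$ has a rainbow $s$-$t$ vertex-cut under $f$ if and only if $G'$ has a rainbow $s$-$t$ vertex-cut under $f'$. For the forward direction, given a rainbow $s$-$t$ vertex-cut $S$ in $G$ (with $S$, $S+s$, or $S+t$ rainbow as appropriate), I claim the same set $S\subseteq V(G)\subseteq V(G')$ is a rainbow $s$-$t$ vertex-cut in $G'$: any $s$-$t$ path in $G'$ (or in $G'-st'$, where $st'$ corresponds to the subdivided edge $st$) projects to an $s$-$t$ walk in $G$ that must pass through $S$, and since $S$ contains no midpoints, the path in $G'$ meets $S$ at an original vertex; rainbowness of $S$ (or $S+s$/$S+t$) is inherited verbatim. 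For the converse, suppose $S'$ is a rainbow $s$-$t$ vertex-cut in $G'$. Replace each midpoint $m_{uv}\in S'$ by one of its two endpoints $u$ or $v$ — choosing, whenever $s$ or $t$ is an endpoint, the other one so as not to disturb the rainbow condition on $S'+s$ or $S'+t$ — to obtain $S\subseteq V(G)$. Then $|S|\le|S'|$, the set $S$ still hits every $s$-$t$ path of $G$ (each such path lifts to an $s$-$t$ path of $G'$, which met $S'$, hence meets $S$), and because the midpoint colors were all fresh and distinct, the colors of the replacement vertices are among the original colors and no new collision among original-vertex colors is created — so $S$ (resp. $S+s$, $S+t$) is rainbow.

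The main obstacle, and the point needing the most care, is the adjacency case: $s$ and $t$ are adjacent in $G$, so in $G'$ they are joined by the length-two path $s\,-\,m_{st}\,-\,t$ rather than by an edge, and the definition of rainbow vertex-cut for \emph{adjacent} pairs (deleting the edge $st$ and asking $S+s$ or $S+t$ to be rainbow) does not literally transfer. I would handle this by noting that in $G'$ the vertices $s$ and $t$ are nonadjacent, so an $s$-$t$ vertex-cut in $G'$ must genuinely separate them in $G'-S'$ and must in particular kill the path through $m_{st}$; conversely one checks that putting $m_{st}$ (or, in the reverse direction, handling the copy of the edge $st$) into the cut set is harmless precisely because its color is fresh. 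Thus I would phrase the reduction so that the "adjacent" subtlety of the original problem is absorbed into the ordinary separation requirement of the subdivided graph — essentially the same device already used in the proof of the $\Delta=3$ theorem, where a new $s$ and $t$ were designated after the gadget construction. After establishing the equivalence, NP-hardness follows from Theorem~\ref{hard} and membership in NP follows from Lemma~\ref{RVD}, completing the proof.
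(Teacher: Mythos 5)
Your overall strategy --- subdivide every edge of $G_{\phi}$ to obtain a bipartite graph and transfer the rainbow $s$-$t$ vertex-cut --- is the same as the paper's, but your choice of coloring for the new vertices breaks the reduction. You assign every midpoint a fresh color, all mutually distinct. Then in $G'$ the set of midpoints adjacent to $s$ (i.e., the midpoints of all former edges $sx_j$, $s\bar{x}_j$ and $st$) is itself a rainbow set, and deleting it isolates $s$; hence $G'$ has a rainbow $s$-$t$ vertex-cut for \emph{every} formula $\phi$, and the claimed equivalence with satisfiability is false. The error is localized in your converse direction: when you replace a midpoint $m_{uv}\in S'$ by an endpoint $u$ or $v$, you assert that ``no new collision among original-vertex colors is created,'' but replacing a fresh-colored vertex by an original vertex injects an original color into $S$ that can coincide with the color of another vertex already in $S$ (e.g.\ $x_j$ and $\bar{x}_j$ both have color $r_j$), so rainbowness is not preserved. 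Essentially, fresh distinct midpoint colors turn vertex cuts into arbitrary mixed vertex/edge cuts, which is a strictly more powerful (and here trivially satisfiable) separation notion.

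The paper's proof avoids this by coloring \emph{all} new subdivision vertices with the single repeated color $r$, the color already shared by $s$, $t$ and the clause vertices $c_i$. Since $s$ and $t$ are nonadjacent in the subdivided graph, a rainbow cut $S'$ must itself be rainbow and must contain the midpoint of the former edge $st$; that midpoint has color $r$, so $S'$ can contain no other midpoint and no $c_i$. This forces $S'$ to consist of original vertices plus that one midpoint, which restores the exact correspondence with rainbow $s$-$t$ vertex-cuts in $G_{\phi}$ (where the condition ``$S+s$ or $S+t$ is rainbow'' already excluded the $c_i$). Note that even your parenthetical alternative of one shared fresh color on all midpoints would still allow one $c_i$ of color $r$ into the cut, which again weakens the correspondence; reusing the color $r$ itself is what makes the argument go through. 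To repair your proof, adopt the paper's coloring of the new vertices and redo the converse direction accordingly; membership in NP via Lemma~\ref{RVD} and hardness via Theorem~\ref{hard} are then as you state.
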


\begin{proof}
By Theorem \ref{hard}, we know that there is a rainbow vertex-cut between $s$ and $t$ in $G_{\phi}$ if and only if $\phi$ is satisfied. Construct a graph $G'_{\phi}$ by subdividing all edges of $G_{\phi}$. Then assign the new vertices with color $r$ and the other vertices with the same color as in $G_{\phi}$. It is easy to show that there is a rainbow vertex-cut between $s$ and $t$ in $G'_{\phi}$ if and only if $\phi$ is satisfied. The proof is thus
complete.
\end{proof}

\noindent {\bf Remark:} This paper is an extended version of \cite{CLLW}, which was published in the proceedings of FAW 2020, Lecture Notes in Computer Science, a LNCS number has not been given, yet.

\end{document}